\documentclass[11pt,a4paper]{article}
\usepackage{preamble}
\usepackage{graphicx}
\usepackage{cleveref}

\usepackage{authblk}
\usepackage{blindtext}
\tikzset{edgee/.style = {->,> = latex'}}
\newcolumntype{P}[1]{>{\centering\arraybackslash}p{#1}}
\newcolumntype{M}[1]{>{\centering\arraybackslash}m{#1}}


\setlength{\parindent}{2em}
\setlength{\parskip}{1em}

\title{\textbf{Combinatorial Interpretations of $q$-Fibonacci Numbers and Their Binomial Analogues}}
\author{Nived J M}

\date{}

\affil{\footnotesize M.Sc Mathematics, 
Indian Institute of Technology Hyderabad, India\\
Email: nivedjm.res@gmail.com
}

\begin{document}

\maketitle

\begin{abstract}
The Fibonomial coefficients are well-known analogues of the classical binomial coefficients. In 2009, Sagan and Savage introduced a combinatorial interpretation for these coefficients, based on tiling a rectangular grid. More recently, Bergeron extended this work by providing a similar interpretation for the $q$-Fibonomial coefficients, using weighted tilings of a rectangular grid. Inspired by Bennett's model, Bergeron also developed a staircase tiling model for the $q$-Fibonomial coefficients. While Bergeron's proofs for the rectangular grid model relied on induction, and the staircase model on bijective correspondences with the rectangular grid model, these approaches lacked deeper structural insights.
In this paper, we propose a novel model for the $q$-Fibonacci numbers that generalizes Bergeron's approach. This new model not only enables us to prove several identities related to $q$-Fibonacci numbers but also provides a non-bijective proof for the staircase model of the $q$-Fibonomial coefficients, offering greater structural clarity. Additionally, we demonstrate new identities involving the $q$-Fibonomial coefficients using this refined rectangular grid model, further enhancing the combinatorial understanding of these mathematical objects.
\end{abstract}


\hspace{-.9 cm} \textbf{Keywords :} $q-$ Fibonomial coefficients, Partition Tiling of a grid, Sagan and Savage model, Bennett's model, Bergeron's model\\
\textbf{2020 Mathematics Subject Classification :} 05B45, 05A10, 05A19, 05B05

\section{Introduction}

The Fibonacci sequence $0, 1, 1, 2, 3, 5, 8, \dots$ is among the most captivating subjects in mathematics. Each term is the sum of the two preceding terms, formally defined by the recurrence relation $F_n = F_{n-1} + F_{n-2}$. Fibonomial coefficients are defined analogously to binomial coefficients for $n, k \in \mathbb{N}$ and $n \geq k \geq 0$:
\[
\begin{bmatrix}
n\\
k
\end{bmatrix}_{F} = \frac{F_n F_{n-1} \dots F_{n-k+1}}{F_1 F_2 \dots F_k}.
\]
We define the $q$-analog of the $n$-th Fibonacci number as $[F_n]_q = \sum_{i=0}^{F_n-1} q^i$. Similarly, for $n, k \in \mathbb{N}$ and $n \geq k \geq 0$, the $q$-analog of the Fibonomial coefficient is given by:
\[
\begin{bmatrix}
n\\
k
\end{bmatrix}_{\mathcal{F}_{\scriptscriptstyle q}} := \frac{[F_{m+n}]_q!}{[F_m]_q! [F_n]_q!}.
\]
In 2009, Sagan and Savage \cite{2} provided a combinatorial interpretation for Lucasnomials, which generalize these fibonomial coefficients. Their interpretation involves tiling an $m \times n$ grid with unit squares (monominos) and $1 \times 2$ rectangular tiles (dominos). Specifically, for $m,n \geq 1$, the quantity $\begin{bmatrix}
m+n\\
n
\end{bmatrix}_F$ counts the number of ways to draw a lattice path from $(0,0)$ to $(m,n)$, then tile each row above the lattice path with monominos and dominos, and tile each column below the lattice path with monominos and dominos, with the restriction that the column tilings cannot start with a monomino. In 2011, Elizabeth Reiland \cite{1} presented combinatorial proofs for certain identities of Fibonomial coefficients  using this tiling model. Later, Arthur T. Benjamin and Elizabeth Reiland \cite{beli} gave further insights by proving few more identities. In 2020, Bennett \cite{3} introduced a new combinatorial interpretation using a staircase structure. The Fibonomial number $\begin{bmatrix}
n\\
k
\end{bmatrix}_F$ counts the number of specific partial tilings of a Young diagram with a staircase shape of size $n$. This model is a direct bijective version of the Sagan and Savage model. However, the structural proof provided for this model in Bennett's paper is particularly notable, as the Sagan and Savage model was presented using an inductive proof.

In 2020, Bergeron \cite{4} proposed two analogous interpretations for $q$-Fibonomial coefficients, specifically using a rectangular grid model and a staircase model. In the same paper, a combinatorial interpretation for $q$-Fibonacci numbers is presented by tiling a $1 \times n$ rectangular strip with monominos and dominos, assigning specific weights to each tile. These weighted tilings are utilized in the inductive proofs of the aforementioned interpretations.

We propose the barrier tiling model, a new interpretation of $q$-Fibonacci numbers that generalizes the existing combinatorial approach \cite{4}. \Cref{sbar} details the core principles of this method and its applications. This interpretation allows us to derive a structural proof for Bergeron's model of $q$-Fibonomial numbers, offering deeper insight into its framework. The structural proof, along with further demonstrations of identities related to $q$-Fibonomial coefficients, will be presented in \Cref{sber}.

\section{A modified interpretation for $q-$Fibonacci numbers}\label{sbar}

The number of distinct ways to tile an $n \times 1$ rectangular strip using monomino and domino tiles is $F_{n+1}$ \cite{proof}. In a similar way, Bergeron, Ceballos, and Kustner \cite{4} introduced a combinatorial framework for $q$-Fibonacci numbers by assigning specific weights to the tiles, referred to as the $q$-weight. In their approach, monominos are given a weight of 1, while a domino covering the $(i-1)$-th and $i$-th cells of the strip is assigned a weight of $q^{F_i}$. The weight of a tiling $T$, denoted by $\omega(T)$, is the product of the weights of its individual tiles. The total weight across all such tilings of a $1\times n$ strip corresponds to $[F_{n+1}]_q$. In this section, we will introduce a more generalized model for $q$-Fibonacci numbers.

\begin{lemma}\cite{4}\label{lemsup}
For $a,b \in \mathbb{N}$ :
\begin{equation*}
    [a.b]_{q} = [a]_{q}[b]_{q^a}
\end{equation*}
\begin{equation*}
    [a+b]_{q} = [a]_{q}+q^a[b]_{q}
\end{equation*}
\end{lemma}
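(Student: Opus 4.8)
The plan is to prove both identities directly from the defining sum $[n]_q = \sum_{i=0}^{n-1} q^i$, treating them as polynomial identities in $q$; this has the advantage of covering the case $q=1$ automatically, so that no separate argument for the closed form $\frac{1-q^n}{1-q}$ is needed.

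First I would establish the additive identity. Starting from $[a+b]_q = \sum_{i=0}^{a+b-1} q^i$, I split the index range $\{0,1,\dots,a+b-1\}$ into the first $a$ terms and the remaining $b$ terms, so that $[a+b]_q = \sum_{i=0}^{a-1} q^i + \sum_{i=a}^{a+b-1} q^i$. The first sum is exactly $[a]_q$. In the second sum I substitute $i = a + j$ with $j$ running from $0$ to $b-1$ and factor out $q^a$, giving $q^a \sum_{j=0}^{b-1} q^j = q^a [b]_q$. Adding the two pieces yields the claim.

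Next I would handle the multiplicative identity by a block-grouping argument. I partition the index set $\{0,1,\dots,ab-1\}$ into $b$ consecutive blocks, where block $j$ (for $0 \le j \le b-1$) consists of the indices $ja, ja+1, \dots, ja+a-1$. Writing each such index as $ja + r$ with $0 \le r \le a-1$ and factoring out $q^{ja}$ from block $j$, I obtain $[ab]_q = \sum_{j=0}^{b-1} q^{ja} \sum_{r=0}^{a-1} q^r = [a]_q \sum_{j=0}^{b-1} (q^a)^j$. The key observation is then that $\sum_{j=0}^{b-1}(q^a)^j$ is precisely $[b]_{q^a}$, the $q^a$-analogue of $b$, which gives $[ab]_q = [a]_q [b]_{q^a}$.

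There is no serious obstacle here; both identities reduce to a careful regrouping of a finite geometric sum. The only point requiring a little attention is the reindexing in each case, in particular recognizing that $\sum_{j=0}^{b-1}(q^a)^j$ matches the definition of $[b]_{q^a}$ with $q$ replaced throughout by $q^a$. An alternative and even shorter route would be to use the closed form $\frac{1-q^n}{1-q}$ and verify both identities by clearing denominators (for the product one notes the telescoping $\frac{1-q^a}{1-q}\cdot\frac{1-q^{ab}}{1-q^a} = \frac{1-q^{ab}}{1-q}$), but then one must dispose of the case $q=1$ separately, which the sum-based argument avoids.
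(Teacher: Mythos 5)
Your proof is correct. The paper offers no proof of this lemma at all --- it simply remarks that the proof is ``skipped as it's straightforward'' --- so your sum-splitting argument for $[a+b]_q = [a]_q + q^a[b]_q$ and block-grouping argument for $[ab]_q = [a]_q[b]_{q^a}$ supply exactly the routine verification the paper leaves implicit. Both reindexings are carried out correctly, and your observation that arguing from the defining sum $\sum_{i=0}^{n-1}q^i$ (rather than the closed form $\frac{1-q^n}{1-q}$) avoids a separate case for $q=1$ is a sensible point of care that the paper never addresses.
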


\begin{lemma}(cf.~\cite{4})\label{iden}
For $n,k \in \mathbb{N}$ and $k\leq n$:
\begin{equation}
    F_{n+1}=F_{n-k+1}F_{k+1}+F_{n-k}F_{k}\label{nop}
\end{equation}
\begin{equation}\label{main}
    [F_{n+1}]_{q} = [F_{k+1}]_{q^{F_{n-k+1}}}[F_{n-k+1}]_{q} +  q^{F_{n-k+1}F_{k+1}}[F_{n-k}]_{q^{F_{k}}}[F_{k}]_{q} 
\end{equation}
\end{lemma}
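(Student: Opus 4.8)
The plan is to prove the two displays in sequence, viewing \eqref{main} as the $q$-analogue of \eqref{nop} obtained by feeding the latter through \Cref{lemsup}. So the first task is to establish the integer identity \eqref{nop}, which is nothing but the classical Fibonacci addition formula. The cleanest route, given the machinery already set up, is combinatorial: by \Cref{corr} the quantity $F_{n+1}$ counts the linear tilings of a $1\times n$ strip. Fixing the boundary between cell $k$ and cell $k+1$, I would partition the tilings according to whether a domino straddles this boundary. If none does, the strip factors as a length-$k$ piece and a length-$(n-k)$ piece, contributing $F_{k+1}F_{n-k+1}$; if a domino straddles it, that domino occupies cells $k$ and $k+1$, leaving a length-$(k-1)$ piece and a length-$(n-k-1)$ piece, contributing $F_{k}F_{n-k}$. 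Summing the two classes gives \eqref{nop}. (An induction on $n$ would also work, but the tiling count is more transparent here.)

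Next I would apply $[\,\cdot\,]_{q}$ to both sides of \eqref{nop} and unfold the right-hand side using the two rules of \Cref{lemsup}. Writing $F_{n+1}=F_{n-k+1}F_{k+1}+F_{n-k}F_{k}$ and invoking the additive rule $[a+b]_{q}=[a]_{q}+q^{a}[b]_{q}$ with $a=F_{n-k+1}F_{k+1}$ and $b=F_{n-k}F_{k}$ produces the two summands, with the prefactor $q^{F_{n-k+1}F_{k+1}}$ correctly attached to the second. I would then apply the multiplicative rule $[ab]_{q}=[a]_{q}[b]_{q^{a}}$ to each product: taking $a=F_{n-k+1}$, $b=F_{k+1}$ in the first term gives $[F_{n-k+1}]_{q}[F_{k+1}]_{q^{F_{n-k+1}}}$, while taking $a=F_{k}$, $b=F_{n-k}$ in the second gives $[F_{k}]_{q}[F_{n-k}]_{q^{F_{k}}}$. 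Reassembling the pieces yields exactly \eqref{main}.

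The computation is short, so the one point I would check most carefully — and the only real subtlety — is the bookkeeping of exponents when invoking the multiplicative rule. Since $[ab]_{q}=[a]_{q}[b]_{q^{a}}$ is asymmetric in $a$ and $b$, one must choose the base factor to be $a=F_{n-k+1}$ (not $F_{k+1}$) in the first term and $a=F_{k}$ in the second, so that the bases $q^{F_{n-k+1}}$ and $q^{F_{k}}$ land on the intended Fibonacci factors matching the right-hand side of \eqref{main}. Once this assignment is fixed, the identity drops out directly, and no further induction beyond the count establishing \eqref{nop} is required.
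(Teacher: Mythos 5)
Your proof is correct and takes essentially the same approach as the paper: a double-counting tiling argument with a fixed bar to establish \eqref{nop}, followed by the additive and then multiplicative rules of \Cref{lemsup} to lift it to \eqref{main}. The only cosmetic difference is that you place the bar after $k$ cells rather than $n-k$, and your write-up spells out the exponent bookkeeping that the paper's terse proof leaves implicit.
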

\begin{proof}
 The identity \ref{nop} can be proven using the double counting method. The left-hand side, $F_{n+1}$, counts the number of ways to tile a rectangular strip of length $n$. By placing a bar after the $(n-k)$-th cell, the right-hand side accounts for two distinct cases: tilings where no domino crosses the bar and those where a domino does. The second identity, \ref{main}, follows directly from identity \ref{nop} in conjunction with Lemma \ref{lemsup}.
\end{proof}
\begin{figure}[h!]
\centering
\begin{subfigure}[b]{1\textwidth}
 \centering
    \begin{tikzpicture}
\draw [black, thick] (0,1) to (0,2) to (10,2) to (10,1) to (0,1)  ;

\draw [black, thick] (1,1) to (1,2) ;
\draw [black, thick] (2,1) to (2,2) ;
\draw [black, thick] (3,1) to (3,2) ;
\draw [blue, line width=2.5pt] (4,1) to (4,2) ;
 \draw [black, thick] (5,1) to (5,2) ;
\draw [black, thick] (6,1) to (6,2) ;
\draw [black, thick] (7,1) to (7,2) ;
\draw [black, thick] (8,1) to (8,2) ;
\draw [black, thick] (9,1) to (9,2) ;

\filldraw[blue] (4,1) circle (4pt) ;

\filldraw[black] (6.5,1.5) circle (2pt) ;
\filldraw[black] (7.5,1.5) circle (2pt) ;
\draw [black,ultra thick] (7.5,1.5) to (6.5,1.5) ;

\filldraw[black] (1.5,1.5) circle (2pt) ;
\filldraw[black] (2.5,1.5) circle (2pt) ;
\draw [black,ultra thick] (1.5,1.5) to (2.5,1.5) ;

\draw[<->] (0,2.5) -- (3,2.5);
\draw [black, thick] (1.5,2.5) node[anchor=south] {floor=3};

\draw[<->] (10,2.5) -- (                                                            6,2.5);
\draw [black, thick] (8,2.5) node[anchor=south] {floor=4};

\draw [black, thick] (2,0.1) node[anchor=south] {$d_{1}$};
\draw [black, thick] (8,0.1) node[anchor=south] {$d_{2}$};

\end{tikzpicture}
\caption{$I$ barrier with \textit{height} = 6+1 = 7.}
   \end{subfigure}
\vspace{0.0005 cm}

\begin{subfigure}[b]{1\textwidth}
 \centering
    \begin{tikzpicture}
\draw [black, thick] (0,1) to (0,2) to (10,2) to (10,1) to (4,1);
\draw [black, thick] (0,1) to (3,1);

\draw [black, thick] (1,1) to (1,2) ;
\draw [black, thick] (2,1) to (2,2) ;
\draw [blue, line width=2.5pt] (3,1) to (3,2) ;
\draw [black, thick] (4,1) to (4,2) ;
 \draw [black, thick] (5,1) to (5,2) ;
\draw [black, thick] (6,1) to (6,2) ;
\draw [black, thick] (7,1) to (7,2) ;
\draw [black, thick] (8,1) to (8,2) ;
\draw [black, thick] (9,1) to (9,2) ;
\draw [blue, line width=2.5pt] (3,1) to (4,1) ;

\filldraw[blue] (4,1) circle (4pt) ;

\filldraw[black] (8.5,1.5) circle (2pt) ;
\filldraw[black] (7.5,1.5) circle (2pt) ;
\draw [black,ultra thick] (7.5,1.5) to (8.5,1.5) ;

\filldraw[black] (1.5,1.5) circle (2pt) ;
\filldraw[black] (2.5,1.5) circle (2pt) ;
\draw [black, ultra thick] (1.5,1.5) to (2.5,1.5) ;

\filldraw[red] (3.5,1.5) circle (2pt) ;
\filldraw[red] (4.5,1.5) circle (2pt) ;
\draw [red, ultra thick] (3.5,1.5) to (4.5,1.5) ;

\draw [black, thick] (2,0.1) node[anchor=south] {$d_{3}$};
\draw [black, thick] (8,0.1) node[anchor=south] {$d_{4}$};
\draw [black, thick] (4,0.1) node[anchor=south] {$d_{5}$};


\draw[<->] (3,2.5) -- (10,2.5);
\draw [black, thick] (6.5,2.5) node[anchor=south] {floor = 7};

\end{tikzpicture}
\caption{$L$ barrier with \textit{height} = 3+1 = 4.}
   \end{subfigure}

    \caption{Demonstration of floors and heights of dominos in a $1\times 10$ strip with barrier point 4.}
   \label{fig:IL BARR}
\end{figure}
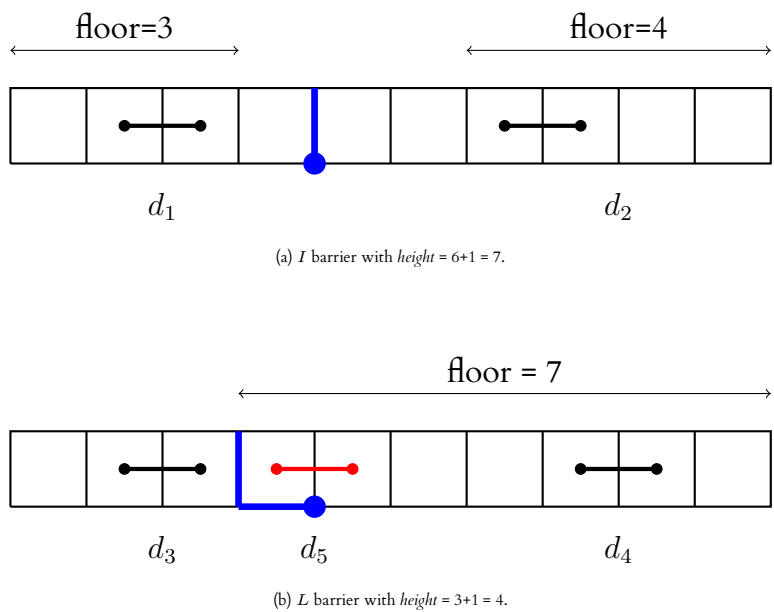

\subsection{The Barrier Tiling method}\label{bari}
 
 We begin by fixing a point along the bottom of a $1 \times n$ rectangular strip, located after $k$ cells from the left end, referred to as the \textit{barrier point} and denoted by $k$. From this barrier point, two types of barriers can be introduced: the $I$ barrier and the $L$ barrier. An $I$ barrier moves one unit vertically, while an $L$ barrier moves one unit vertically after first moving one unit to the left. These barriers partition the strip into two compartments, which are tiled independently using monominos and dominos. A constraint is imposed on tilings with an $L$ barrier, where the first tile in the right compartment must be a domino, referred to as the \textit{special domino}. Such a tiling, associated with a specific barrier point, is called a \textit{barrier tiling}. The set of all barrier tilings of a $1 \times n$ strip with barrier point $k$ is denoted by $\mathcal{B}(n,k)$.

In line with a key application of this interpretation, we adopt terminology from Bergeron's model \cite{4}. The \textit{height} $h$ of an $I$ barrier is defined as one plus the number of cells in the right compartment. For an $L$ barrier, the height is one plus the number of cells in the left compartment. The \textit{height} of a domino $d$ denoted by $h(d)$, is defined as the height of its associated barrier, while the \textit{floor} $f(d)$ of a domino represents the number of cells between the end of the strip in the same compartment and the farthest end of the domino. Using these definitions, the following weights are assigned to the tiles:
\begin{itemize}
    \item Each monomino has a weight of 1.
    \item A domino $d$ in the right compartment of an $I$ barrier or the left compartment of an $L$ barrier has a weight of $q^{F_{f(d)}}$.
    \item A domino $d$ in the left compartment of an $I$ barrier or the right compartment of an $L$ barrier, excluding the special domino, has a weight of $q^{F_{f(d)} F_{h(d)}}$.
    \item The special domino has a weight of $q^{F_{f(d)} F_{h(d)+1}}$.
\end{itemize}
 Given the assigned weights, we denote the weight of a tiling $T$ as $B(T)$ and that of a domino $d$ as $B(d)$. Here, $B(T)$ is simply the product of the weights of the dominos in the tiling, since monominos have a weight of 1.

Consider the example illustrated in Figure \ref{fig:IL BARR}. Since $d_1$ is located in the left compartment of the $I$ barrier, its weight is $B(d_1) = q^{F_{f(d_1)} F_{h(d_1)}} = q^{F_3 F_7}$, whereas $d_2$, in the right compartment, has a weight of $B(d_2) = q^{F_{f(d_2)}} = q^{F_4}$. Similarly, we calculate $B(d_3) = q^{F_{f(d_3)} F_{h(d_3)}} = q^{F_3 F_4}$ and $B(d_4) = q^{F_{f(d_4)}} = q^{F_3}$. Note, however, that the special domino $d_5$ has a weight of $B(d_5) = q^{F_{f(d_5)} F_{h(d_5)+1}} = q^{F_5 F_5}$.

\begin{theorem}\label{bthm}
The total weight of all barrier tilings of a $1 \times n$ strip, with respect to the barrier point $k$, is given by $[F_{n+1}]_{q}$.
\begin{equation*} 
  [F_{n+1}]_{q} = \sum_{T \in \mathcal{B}(n,k)} B(T)
\end{equation*}
\end{theorem}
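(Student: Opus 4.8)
The plan is to split the set $\mathcal{B}(n,k)$ according to the type of barrier and to show that the $I$-barrier tilings account for the first summand of \eqref{main} in \Cref{iden}, while the $L$-barrier tilings account for the second; adding the two families and invoking \Cref{iden} then produces $[F_{n+1}]_q$. Since the two compartments created by any barrier are tiled independently and $B(T)$ is the product of the individual tile weights, the total weight over either family factors as a product of two single-compartment sums, each of which I will evaluate using \Cref{qlemma}.

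The key reduction I will use repeatedly is that weighting every domino of a compartment by $q^{F_{f(d)}\cdot c}$ for a fixed constant $c$ is the same as running the ordinary $q$-weighting of \Cref{qlemma} with $q$ replaced by $q^{c}$: the weight of a full tiling becomes $\prod_{d}\bigl(q^{c}\bigr)^{F_{f(d)}}$, so the compartment sum is obtained from $[F_{\ell+1}]_q$ by the substitution $q\mapsto q^{c}$, giving $[F_{\ell+1}]_{q^{c}}$ for a compartment of length $\ell$. Here the floor is measured from the outer end of the compartment; for the right-hand compartments this is the reflected version of \Cref{qlemma}, but the total is unchanged because reversing a strip is a weight-preserving involution on its tilings.

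For an $I$ barrier with barrier point $k$, the left compartment has length $k$ and each of its dominos carries weight $q^{F_{f(d)}F_{n-k+1}}$, since the height is $h=n-k+1$; by the reduction this compartment contributes $[F_{k+1}]_{q^{F_{n-k+1}}}$. The right compartment has length $n-k$ and is tiled with the plain weight $q^{F_{f(d)}}$, contributing $[F_{n-k+1}]_q$, so the product is exactly the first term of \eqref{main}. For an $L$ barrier the left compartment has length $k-1$ with plain weights, contributing $[F_k]_q$, and the right compartment has length $n-k+1$ with a forced leading domino. That special domino covers the two cells adjacent to the barrier, so its floor is the fixed value $n-k+1$ and, using $h+1=k+1$, its weight is the constant $q^{F_{n-k+1}F_{k+1}}$, which I factor out. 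The sub-strip remaining to its right has length $n-k-1$ and its dominos carry weight $q^{F_{f(d)}F_k}$, so by the reduction it contributes $[F_{n-k}]_{q^{F_k}}$; multiplying gives the second term of \eqref{main}.

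Adding the $I$- and $L$-contributions and applying \Cref{iden} yields $[F_{n+1}]_q$. The main obstacle is bookkeeping rather than conceptual: one must pin down exactly which end each floor is measured from so that the correct compartment length feeds into \Cref{qlemma}, and verify that the exponent multipliers (the factor $F_{h}$ on the inner side and $F_{h+1}$ on the special domino) are precisely the constants $F_{n-k+1}$, $F_k$ and $F_{k+1}$ appearing in \eqref{main}. As a consistency check, setting $q=1$ collapses every weight to $1$, the two families then count $F_{k+1}F_{n-k+1}$ and $F_kF_{n-k}$ tilings respectively, and their sum is $F_{n+1}$ by \eqref{nop}; thus \Cref{bthm} is a $q$-refinement of the classical identity \eqref{nop}, with the special-domino rule encoding the single domino that crosses the bar.
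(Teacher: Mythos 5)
Your proof is correct and takes essentially the same route as the paper's: split $\mathcal{B}(n,k)$ by barrier type, evaluate each compartment sum via Lemma \ref{qlemma} after the substitution $q\mapsto q^{F_{h}}$, and conclude by adding the two contributions and invoking Lemma \ref{iden}. The only addition is your explicit reflection argument for floors measured from the outer end of a compartment, a point the paper's proof leaves implicit.
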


\begin{proof}
Consider an $I$ barrier placed at the barrier point $k$. This partitions the strip into left and right compartments consisting of $k$ and $n-k$ cells, respectively. The total weight of all possible tilings in the right compartment is clearly $[F_{n-k+1}]_{q}$, as we are assigning $q$-weights to the tiles in this region (cf.~\cite{4}). Each domino in the left compartment has a weight given by $q^{F_{f(d)}F_{h(d)}} = (q^{F_{h}})^{F_{f(d)}} = (q^{F_{n-k+1}})^{F_{f(d)}}$. This can be interpreted as a $q$-weighted tiling of a $1 \times k$ strip with parameter $q^{F_{n-k+1}}$. Therefore, the total weight of all tilings in the left compartment is $[F_{k+1}]_{q^{F_{n-k+1}}}$. Hence, the total weight of all tilings with an $I$ barrier at position $k$ is $[F_{n-k+1}]_{q}[F_{k+1}]_{q^{F_{n-k+1}}}$.

Next, consider the case where an $L$ barrier is placed at the barrier point $k$. Since the special domino covers the $k$-th and $(k+1)$-th cells, its floor value is $n-k+1$. Since the height of the barrier is $k$, the weight of the special domino is $q^{F_{f(d)}F_{h(d)+1}} = q^{F_{n-k+1}F_{k+1}}$. The left compartment has $[F_{k}]_{q}$ possible tilings, each assigned a $q$-weight. Similarly, as in the $I$ barrier case, the right compartment (excluding the special domino) can be viewed as a $q$-weighted tiling of a $1 \times (n-k-1)$ strip with parameter $q^{F_{k}}$. Therefore, the total weight of all tilings with an $L$ barrier at position $k$ is $q^{F_{n-k+1}F_{k+1}}[F_{k}]_{q}[F_{n-k}]_{q^{F_{k}}}$.

By applying Lemma \ref{iden}, it follows that the sum of these weights equals $[F_{n+1}]_{q}$, completing the proof.
\end{proof}

\subsection{Identities}

Using the barrier tiling interpretation, we will present combinatorial proofs for both established and new $q$-Fibonacci identities in this segment.

\begin{proposition}\label{ip1}\cite{4}
For $n,k\in \mathbb{N}$, $n\geq 1$ and $0\leq k\leq n$:
    \begin{equation*}
    [F_{n}]_{q} = [F_{k+1}]_{q^{F_{n-k}}}[F_{n-k}]_{q} +  q^{F_{n-k}F_{k+1}}[F_{n-k-1}]_{q^{F_{k}}}[F_{k}]_{q} 
\end{equation*}
\end{proposition}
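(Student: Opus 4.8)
The plan is to observe that this identity is precisely Theorem~\ref{bthm} (equivalently, equation~\ref{main} of Lemma~\ref{iden}) with $n$ replaced by $n-1$, and to reprove it directly by the barrier tiling method so that the combinatorial meaning of each summand is transparent. Concretely, I would work on a $1\times(n-1)$ strip and fix the barrier point at $k$. By Theorem~\ref{bthm} applied to this strip, the total weight $\sum_{T\in\mathcal{B}(n-1,k)}B(T)$ equals $[F_{(n-1)+1}]_q=[F_n]_q$, which is the left-hand side. The whole argument then amounts to evaluating this total weight a second way, by splitting according to the type of barrier erected at $k$.

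First I would handle the $I$ barrier. Its left and right compartments contain $k$ and $n-k-1$ cells respectively. The right compartment is $q$-weighted, so by Lemma~\ref{qlemma} it contributes $[F_{n-k}]_q$; every domino in the left compartment carries weight $q^{F_{f(d)}F_{h}}$ with height $h=(n-1)-k+1=n-k$, which is a $q$-weighted tiling in the rescaled base $q^{F_{n-k}}$ and hence contributes $[F_{k+1}]_{q^{F_{n-k}}}$. Multiplying gives the first summand $[F_{k+1}]_{q^{F_{n-k}}}[F_{n-k}]_q$, exactly as in the proof of Theorem~\ref{bthm} with the index shift.

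Next I would treat the $L$ barrier. Here the special domino covers the $k$-th and $(k+1)$-th cells, so on a length-$(n-1)$ strip it has floor $n-k$ and the barrier has height $k$; by the weighting rule its weight is $q^{F_{n-k}F_{k+1}}$, which produces the global prefactor. The left compartment (of $k-1$ cells) is $q$-weighted, contributing $[F_k]_q$, while the dominoes to the right of the special domino are weighted by $q^{F_{f(d)}F_{k}}$, i.e.\ a $q$-weighted tiling in the base $q^{F_k}$ over the remaining $n-k-2$ cells, contributing $[F_{n-k-1}]_{q^{F_k}}$. This yields the second summand $q^{F_{n-k}F_{k+1}}[F_{n-k-1}]_{q^{F_k}}[F_k]_q$, and adding the two cases gives the right-hand side.

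I do not expect a genuine obstacle, since the content is already packaged in Theorem~\ref{bthm}/Lemma~\ref{iden}; the real work is bookkeeping the floor and height values and the two rescaled bases $q^{F_{n-k}}$ and $q^{F_k}$ correctly. The only point needing a little care is the extreme barrier points: when $k=0$ or $k=n-1$ one barrier type is unavailable, and the corresponding term vanishes automatically because it carries a factor $[F_0]_q=0$; moreover the stated range includes $k=n$, which is not realised by a length-$(n-1)$ strip and must be checked separately using the conventions $F_{-1}=1$, $F_0=0$, under which the first term vanishes and the second collapses to $[F_n]_q$.
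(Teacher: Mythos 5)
Your proposal is correct and takes essentially the same route as the paper's own proof: both barrier-tile a $1\times(n-1)$ strip with barrier point $k$, identify the total weight $[F_n]_q$ via Theorem~\ref{bthm}, and split into the $I$-barrier and $L$-barrier cases with exactly the same compartment-by-compartment weight computations (left compartment in base $q^{F_{n-k}}$, special domino of weight $q^{F_{n-k}F_{k+1}}$, and so on). The only material you add beyond the paper is the explicit index-shift remark and the boundary checks at $k=0$, $k=n-1$, and $k=n$, which the paper leaves implicit.
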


\begin{proof}
    The proposition serves as an alternative formulation of Lemma \ref{iden}.  The left-hand side signifies the total number of ways to barrier tile a \(1 \times (n-1)\) strip with the barrier point located at \(k\). The first and second expressions on the right-hand side account for the number of barrier tilings corresponding to an \(I\) barrier and an \(L\) barrier at \(k\), respectively. 
\end{proof}

\begin{corollary}\cite{sch}
    For $n\geq 2$:
    \begin{equation*}
    [F_{n}]_{q} = [F_{n-1}]_{q} +  q^{F_{n-1}}[F_{n-2}]_{q}.
\end{equation*}
\end{corollary}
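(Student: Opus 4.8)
The plan is to derive this corollary as the special case $k=1$ of Proposition~\ref{ip1}, since setting $k=1$ should collapse the general two-compartment decomposition into the familiar Fibonacci-type recurrence with a single $q$-shift. First I would substitute $k=1$ directly into the right-hand side of Proposition~\ref{ip1}, which reads $[F_{k+1}]_{q^{F_{n-k}}}[F_{n-k}]_{q} + q^{F_{n-k}F_{k+1}}[F_{n-k-1}]_{q^{F_{k}}}[F_{k}]_{q}$. With $k=1$ we have $F_{k}=F_{1}=1$ and $F_{k+1}=F_{2}=1$, so the exponents $F_{k}$ and $F_{k+1}$ that appear in the $q$-analogue subscripts and in the outer factor all become $1$.

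The key simplifications to carry out are the following. In the first term, $[F_{k+1}]_{q^{F_{n-k}}} = [F_{2}]_{q^{F_{n-1}}} = [1]_{q^{F_{n-1}}} = 1$, so the first term reduces to $[F_{n-1}]_{q}$, matching the first summand of the claim. In the second term, the outer power is $q^{F_{n-k}F_{k+1}} = q^{F_{n-1}\cdot F_{2}} = q^{F_{n-1}}$; the factor $[F_{k}]_{q} = [F_{1}]_{q} = [1]_{q} = 1$; and the remaining $q$-analogue is $[F_{n-k-1}]_{q^{F_{k}}} = [F_{n-2}]_{q^{F_{1}}} = [F_{n-2}]_{q^{1}} = [F_{n-2}]_{q}$. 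Hence the second term becomes $q^{F_{n-1}}[F_{n-2}]_{q}$, exactly the second summand. I would also note that the constraint $0\le k\le n$ from Proposition~\ref{ip1} is satisfied by $k=1$ precisely when $n\ge 1$, but the Fibonacci indices $F_{n-1}$ and $F_{n-2}$ require $n\ge 2$ for the expression to be meaningful, which is why the corollary is stated for $n\ge 2$.

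The main (and only) obstacle here is bookkeeping rather than genuine difficulty: one must verify that every occurrence of $F_{1}$ and $F_{2}$ really equals $1$ under the indexing convention the paper uses (base cases $F$ as in Corollary~\ref{corr}, where $F_{n+1}$ counts tilings of length $n$, giving $F_{1}=F_{2}=1$), and confirm that $[1]_{q^a}=1$ for any base $q^a$ directly from the definition $[n]_q=\sum_{i=0}^{n-1}q^i$. Since all four of these reductions are immediate, the corollary follows at once from Proposition~\ref{ip1} without any further combinatorial argument; alternatively, one could prove it directly by placing the barrier point at $k=1$ in a $1\times(n-1)$ strip, but invoking the proposition is cleaner.
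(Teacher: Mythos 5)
Your proof is correct and takes essentially the same approach as the paper: the paper likewise obtains the corollary by specializing Proposition~\ref{ip1}, except that it chooses the barrier point $k=n-2$ rather than your $k=1$; both choices work because $F_{1}=F_{2}=1$ trivializes the relevant factors (in your case $[F_{2}]_{q^{F_{n-1}}}$ and $[F_{1}]_{q}$, in the paper's case $[F_{2}]_{q}$ and $[F_{1}]_{q^{F_{n-2}}}$), and your bookkeeping of these reductions is accurate.
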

\begin{proof}
    Choosing barrier point $k$ as $n-2$ in the previous proposition leads to this identity.
\end{proof}

\begin{proposition}\label{propid1}
    For $n,a\in\mathbb{N}$, $n\geq 1$ and $1\leq a\leq n+1$:
    \begin{multline*}
    [F_{n+1}]_{q} = [F_{n-a+2}]_{q^{F_{a}}}   [F_{a}]_{q}  \\
    +
    [F_{a}]_{q}\mathlarger{\mathlarger{\sum}}_{i=1}^{\floor*{\frac{n}{a}}-1} 
    \left \{  q^{F_{a}\sum_{j=1}^{i} F_{n-ja+2}} 
    \left( \prod_{j=1}^{i}[F_{a-1}]_{q^{F_{n-ja+1}}}\right)
    [F_{n-(i+1)a+2}]_{q^{F_{a}}} \right \} 
    \\
    + 
    q^{F_{a}\mathlarger{\sum}_{j=1}^{\floor*{\frac{n}{a}}} F_{n-ja+2}}  \left(\prod_{j=1}^{\floor*{\frac{n}{a}}}[F_{a-1}]_{q^{F_{n-ja+1}}}\right)
   \left [F_{n-\floor*{\frac{n}{a}}a+1}\right ]_{q}
    \end{multline*}
\end{proposition}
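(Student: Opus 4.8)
The plan is to read the left side through Theorem \ref{bthm} and then peel off one barrier at a time, reducing to the same statement for a shorter strip. Concretely, I would interpret $[F_{n+1}]_q$ as the total $B$-weight of all barrier tilings of a $1\times n$ strip and apply the two-term split of Lemma \ref{iden} (equivalently Proposition \ref{ip1}) with the barrier point chosen so that the right compartment contains exactly $a$ cells, i.e. at barrier point $k=n-a$. This writes $[F_{n+1}]_q$ as the $I$-barrier contribution $[F_{n-a+1}]_{q^{F_{a+1}}}[F_{a+1}]_q$, which is precisely the first term of the identity, plus the $L$-barrier contribution $q^{F_{a+1}F_{n-a+1}}[F_a]_{q^{F_{n-a}}}[F_{n-a}]_q$. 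The crucial observation is that the trailing factor $[F_{n-a}]_q=[F_{(n-(a+1))+1}]_q$ is again, by Theorem \ref{bthm}, the weight of all barrier tilings of a shorter strip, so the very same decomposition may be applied to it.

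Iterating this $I$/$L$ split is the heart of the argument, and the cleanest way to make it rigorous is induction on $M:=\lfloor n/(a+1)\rfloor$. The base case $M=1$ is exactly one application of Lemma \ref{iden}. For the inductive step I would apply Lemma \ref{iden} once, leave its first term untouched, and feed the trailing factor $[F_{n-a}]_q$ into the induction hypothesis; its value of $M$ drops by exactly one, since subtracting $a+1$ from the numerator lowers the floor by one. Multiplying the resulting expansion by the $L$-prefactor $q^{F_{a+1}F_{n-a+1}}[F_a]_{q^{F_{n-a}}}$ and reindexing ($i\mapsto i+1$) should turn its leading term into the $i=1$ summand, shift the remaining summands, and carry its terminal term over to the stated final term.

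The step I expect to be genuinely fiddly is the index bookkeeping, not the idea. I would have to verify that multiplying through by successive $L$-prefactors converts the isolated powers $q^{F_{a+1}F_{\cdot}}$ into the cumulative exponent $q^{F_{a+1}\sum_{j=1}^{i}F_{n-j(a+1)+2}}$, that the nested bases assemble correctly into $\prod_{j=1}^{i}[F_a]_{q^{F_{n-j(a+1)+1}}}$, and that the $I$-peel at level $i$ equals $[F_{a+1}]_q[F_{n-(i+1)(a+1)+2}]_{q^{F_{a+1}}}$. The base-change identities $[ab]_q=[a]_q[b]_{q^a}$ and $[a+b]_q=[a]_q+q^a[b]_q$ of Lemma \ref{lemsup}, which already powered the proof of Lemma \ref{iden}, are exactly what I would use to realign these bases at each stage.

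Finally I would pin down the termination and the shape of the last term. The recursion stops after $M=\lfloor n/(a+1)\rfloor$ peels, leaving the undecomposed factor $[F_{n-M(a+1)+1}]_q$, which carries no $[F_{a+1}]_q$ and hence differs in form from the summands. I would treat the boundary case $(a+1)\mid(n+1)$ separately: there the last barrier sits at point $0$, where only an $I$-barrier is possible, and the vanishing $[F_0]_q=0$ of the would-be $L$-branch keeps the count consistent, so the floor value $\lfloor n/(a+1)\rfloor$ turns out to be exactly right in every case.
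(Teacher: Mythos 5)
Your proposal is correct and follows essentially the same route as the paper: the paper likewise fixes the first barrier point at $n-a$, splits into the $I$- and $L$-barrier cases via Proposition~\ref{ip1}, and then iterates by installing successive barrier points at $n-j(a+1)+1$, obtaining the middle sum by classifying tilings according to the index $i$ of the first $I$ barrier and the last term from the all-$L$ case. Your induction on $\lfloor n/(a+1)\rfloor$ is just a rigorous packaging of that same iteration (the paper unrolls the recursion as a direct combinatorial sum instead of invoking an induction hypothesis), and the reindexing you flag as fiddly does close exactly as you predict, with the $L$-prefactor $q^{F_{a+1}F_{n-a+1}}[F_a]_{q^{F_{n-a}}}$ becoming the $j=1$ factor of the cumulative exponent and product.
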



\begin{proof}
    The given identity can be proven combinatorially using the barrier tiling method, which offers a clearer and more accessible approach compared to algebraic techniques. We begin by fixing the barrier point at $n-(a-1)$ in a $1 \times n$ rectangular strip. There are $[F_{n+1}]_q$ possible ways to barrier tile this strip.

\textbf{Step 0:} Consider the barrier positioned at \(n-(a-1)\), is of type \(I\). The number of tilings of the strip in this case is given by $[F_{n-a+2}]_{q^{F_{a}}} [F_{a}]_q$.

\textbf{Step 1:} Consider the barrier at $n-(a-1)$ is of type $L$. The right compartment can be tiled in $q^{F_{a}F_{n-a+2}}[F_{a-1}]_{q^{F_{n-a+1}}}$ ways. The left compartment, which has $n-a$ cells, can be tiled in $[F_{n-a+1}]_q$ ways. Note that, an equivalent weight can be obtained by applying the barrier tiling method to the left compartment, rather than relying on the $q$-weighted tiling approach. So we can install a new barrier $a-1$ cells from the rightmost edge of the compartment, at position $n-2a+1$. When the new barrier is of type $I$, there are $[F_{n-2a+2}]_{q^{F_{a}}}[F_{a}]_q$ possible tilings. Therefore, the total number of tilings with an $L$ barrier at $n-a+1$ and an $I$ barrier at $n-2a+1$ is $q^{F_{a}F_{n-a+2}}[F_{a-1}]_{q^{F_{n-a+1}}}[F_{n-2a+2}]_{q^{F_{a}}}[F_{a}]_q$. 

 \textbf{Step 2:} If the new barrier at $n-2a+1$ is of type $L$, the same process is repeated by introducing another barrier point at $n-3a+1$.

\textbf{Step $i$:} It can be observed that the set of all barrier points is given by \(\{n - ja + 1 \mid 1 \leq j \leq i+1, j \in \mathbb{N}\}\). Among these points, all but the \((i+1)^{\text{th}}\) barrier are of type \(L\). The floor value of each special domino associated with the \(L\) barriers is \(a\), since we place the barriers \(a-1\) cells apart at each step. Note that the distance between consecutive barrier points is \(a\) cells. Thus, the height of the \(j^{\text{th}}\) barrier is \(n - ja + 1\) for $1\leq j\leq i$, and consequently, all special dominoes together contribute a combined weight of $q^{F_{a} \sum_{j=1}^{i} F_{n - ja + 2}}$.

Excluding the special domino, the right compartment of each \(L\) barrier contains \(a-2\) cells. The number of ways to tile this right portion of the \(j^{\text{th}}\) barrier is given by \( [F_{a-1}]_{q^{F_{n - ja + 1}}} \). Therefore, the total contribution from these right portions is $\prod_{j=1}^{i} [F_{a-1}]_{q^{F_{n - ja + 1}}}$. At the \((i+1)^{\text{th}}\) barrier point, which is of type \(I\), the remaining \(1 \times (n - ia)\) compartment can be tiled in $\left[F_{n - (i+1)a + 2}\right]_{q^{F_{a}}}[F_{a}]_q$ ways. The total weight contributed at the \(i^{\text{th}}\) step is then
\[
q^{F_{a} \sum_{j=1}^{i} F_{n - ja + 2}} \left( \prod_{j=1}^{i} [F_{a-1}]_{q^{F_{n - ja + 1}}} \right) \left[F_{n - (i+1)a + 2}\right]_{q^{F_{a}}} [F_{a}]_q.
\]
Summing this expression over all \(1 \leq i \leq \left\lfloor \frac{n}{a} \right\rfloor - 1\) gives the second term on the left-hand side of the identity.

\textbf{Final step:} We introduce an $L$ barrier at the $\left\lfloor \frac{n}{a} \right\rfloor^{th}$ barrier point, located at $n-\left\lfloor \frac{n}{a} \right\rfloor a+1$. The special domino has a weight of $q^{F_{a}F_{n-\left\lfloor \frac{n}{a} \right\rfloor a+2}}$. The left and right compartments (excluding the special domino) can be tiled in $\left[F_{n-\left\lfloor \frac{n}{a} \right\rfloor a+1}\right]_q$ and $[F_{a-1}]_{q^{F_{n-\left\lfloor \frac{n}{a} \right\rfloor a+1}}}$ ways, respectively. Hence, the total number of ways to tile this compartment with an $L$ barrier at the $\left\lfloor \frac{n}{a} \right\rfloor^{th}$ barrier point is:
\[
q^{F_{a}F_{n-\left\lfloor \frac{n}{a} \right\rfloor a+2}}\left[F_{n-\left\lfloor \frac{n}{a} \right\rfloor a+1}\right]_q \left[F_{a-1}\right]_{q^{F_{n-\left\lfloor \frac{n}{a} \right\rfloor a+1}}}
\]
Thus, the total number of tilings for the entire $1 \times n$ strip, when all $\left\lfloor \frac{n}{a} \right\rfloor$ barriers are of type $L$, is:
\[
q^{F_{a}\sum_{j=1}^{\left\lfloor \frac{n}{a} \right\rfloor} F_{n-j a+2}}\left(\prod_{j=1}^{\left\lfloor \frac{n}{a} \right\rfloor}[F_{a-1}]_{q^{F_{n-ja+1}}}\right)[F_{n-\left\lfloor \frac{n}{a} \right\rfloor a+1}]_q
\]
This completes the proof for the final term on the right-hand side. 
\end{proof}

\begin{corollary}
    For $n\geq 1$:
    \begin{equation}\label{sho}
    [F_{n+1}]_{q} = [F_{n}]_{q}    +
    \sum_{i=1}^{\floor*{\frac{n}{2}}-1} 
    \left \{  q^{\sum_{j=1}^{i} F_{n-2j+2}} 
    [F_{n-2i}]_{q} \right \} 
   + q^{\sum_{j=1}^{\floor*{\frac{n}{2}}} F_{n-2j+2}}  
    \end{equation}
\end{corollary}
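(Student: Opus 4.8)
The plan is to obtain this corollary as the special case $a = 1$ of Proposition \ref{propid1}, so no fresh combinatorial argument is needed—only a careful substitution and simplification. First I would record the arithmetic facts that make everything collapse: since $F_1 = F_2 = 1$, setting $a = 1$ gives $a+1 = 2$, $F_{a+1} = F_2 = 1$, and $F_a = F_1 = 1$. Consequently $q^{F_{a+1}} = q$, while every factor of the form $[F_a]_{q^{\cdots}} = [F_1]_{q^{\cdots}} = [1]_{q^{\cdots}} = 1$ and the prefactor $[F_{a+1}]_q = [1]_q = 1$ disappear. In particular the products $\prod_{j}[F_a]_{q^{F_{n-j(a+1)+1}}}$ occurring in the second and third terms become empty products equal to $1$. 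Equivalently, one could argue combinatorially by fixing the barrier point at $n-1$ (one cell from the right end) and iterating, but the substitution is cleaner.

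Next I would simplify the three terms of Proposition \ref{propid1} in turn. The first term $[F_{n-a+1}]_{q^{F_{a+1}}}[F_{a+1}]_q$ becomes $[F_n]_q\cdot 1 = [F_n]_q$. For the summand of the second term, the exponent $F_{a+1}\sum_{j=1}^i F_{n-j(a+1)+2}$ reduces to $\sum_{j=1}^i F_{n-2j+2}$, and the factor $[F_{n-(i+1)(a+1)+2}]_{q^{F_{a+1}}}$ reduces to $[F_{n-2i}]_q$, using $(i+1)(a+1) = 2i+2$ and $q^{F_2}=q$; this reproduces exactly the summation in \eqref{sho} over $1 \le i \le \floor*{\frac{n}{2}}-1$. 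In the third term the exponent becomes $\sum_{j=1}^{\floor*{\frac{n}{2}}} F_{n-2j+2}$, leaving the trailing factor $\left[F_{n-2\floor*{\frac{n}{2}}+1}\right]_q$ to be dealt with.

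The only point requiring a moment's care—and the closest thing to an obstacle—is that trailing factor $\left[F_{n-2\floor*{\frac{n}{2}}+1}\right]_q$. I would dispose of it by splitting on the parity of $n$: if $n$ is even then $n - 2\floor*{\frac{n}{2}} = 0$, so the factor is $[F_1]_q = [1]_q = 1$; if $n$ is odd then $n - 2\floor*{\frac{n}{2}} = 1$, so the factor is $[F_2]_q = [1]_q = 1$. In both cases it equals $1$, so the third term collapses to the single power $q^{\sum_{j=1}^{\floor*{\frac{n}{2}}} F_{n-2j+2}}$, matching the last term of \eqref{sho}. Assembling the three simplified terms then yields \eqref{sho} verbatim, which completes the proof.
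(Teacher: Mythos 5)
Your proposal is correct and is exactly the route the paper takes: the paper's proof is the one-line observation that the identity follows from Proposition \ref{propid1} with step size $a=1$, and you have simply carried out that substitution in full detail (including the parity check that $\left[F_{n-2\floor*{\frac{n}{2}}+1}\right]_{q}=1$, which the paper leaves implicit). The only nitpick is terminological: the products $\prod_{j}[F_{a}]_{q^{F_{n-j(a+1)+1}}}$ do not become \emph{empty} products at $a=1$; rather each factor equals $[F_{1}]_{q^{\cdots}}=[1]_{q^{\cdots}}=1$, so their value is $1$ as you claim.
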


\begin{proof}
    The result follows directly from Proposition \ref{propid1} by substituting \( a = 2 \). 
\end{proof}

\begin{corollary}
    For $n\geq 1$:
    \begin{equation*}
        F_{n+1}=1+\sum_{j=1}^{\floor*{\frac{n}{2}}} F_{n-2j+2}
    \end{equation*}
\end{corollary}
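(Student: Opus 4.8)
The plan is to obtain this purely Fibonacci statement as the $q=1$ specialization of \Cref{sho}, exactly as that $q$-identity was itself the $a=1$ specialization of Proposition \ref{propid1}. First I would recall that the $q$-analogue is $[m]_{q}=\sum_{i=0}^{m-1}q^{i}$, so evaluating at $q=1$ gives $[m]_{q}\big|_{q=1}=m$; in particular $[F_{m}]_{q}\big|_{q=1}=F_{m}$, while every prefactor of the form $q^{(\cdots)}$ collapses to $1$. Substituting $q=1$ into
\[
[F_{n+1}]_{q} = [F_{n}]_{q} + \sum_{i=1}^{\floor*{\frac{n}{2}}-1} \left\{ q^{\sum_{j=1}^{i} F_{n-2j+2}} [F_{n-2i}]_{q} \right\} + q^{\sum_{j=1}^{\floor*{\frac{n}{2}}} F_{n-2j+2}}
\]
therefore turns the left side into $F_{n+1}$, the leading term into $F_{n}$, each summand into $F_{n-2i}$, and the trailing power into the constant $1$, so that $F_{n+1} = F_{n} + \sum_{i=1}^{\floor*{\frac{n}{2}}-1} F_{n-2i} + 1$.

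The only genuine work is the index bookkeeping needed to see that this matches the stated sum $1+\sum_{j=1}^{\floor*{\frac{n}{2}}} F_{n-2j+2}$. I would observe that the $j=1$ term of the target, namely $F_{n-2\cdot 1+2}=F_{n}$, is precisely the isolated leading term, while reindexing the residual sum by $j=i+1$ sends $F_{n-2i}$ to $F_{n-2(j-1)}=F_{n-2j+2}$ and carries the range $1\le i\le \floor*{\frac{n}{2}}-1$ over to $2\le j\le \floor*{\frac{n}{2}}$. Together with the additive $1$ these reassemble exactly into $1+\sum_{j=1}^{\floor*{\frac{n}{2}}} F_{n-2j+2}$, which is the claim.

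As a sanity check I would verify the boundary cases where the floors make a sum empty (for instance $n=1,2$), and, if a self-contained argument were preferred to citing \Cref{sho}, I would note that the identity also falls out of iterating $F_{m+1}=F_{m}+F_{m-1}$: repeatedly expanding only the smaller-index term peels off $F_{n},F_{n-2},F_{n-4},\dots$ until the residual term is $F_{2}=F_{1}=1$, supplying the constant. I do not anticipate any real obstacle here; once the specialization $q\mapsto 1$ and the reindexing are made explicit the statement is routine.
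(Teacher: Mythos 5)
Your proposal is correct, but it is not the argument the paper actually writes out: the paper concedes in one sentence that substituting $q=1$ into equation \ref{sho} gives the result, and then deliberately presents an ``alternate method'' based on degree extraction. Concretely, the paper compares the \emph{highest} power of $q$ on the two sides of \ref{sho}: on the left it is $q^{F_{n+1}-1}$; on the right, since every $[F_k]_q$ has nonnegative coefficients, the top exponent is attained in the final term $q^{\sum_{j=1}^{\lfloor n/2\rfloor}F_{n-2j+2}}$ (the first term and each summand of the middle sum are shown to have strictly smaller top degree), and equating the two exponents yields the identity. Your route --- evaluation at $q=1$, which sends $[F_m]_q\mapsto F_m$ and every prefactor $q^{(\cdots)}\mapsto 1$, followed by the reindexing $j=i+1$ that absorbs the isolated $F_n$ as the $j=1$ term and the trailing power as the constant $1$ --- is the more elementary one, and your bookkeeping is right. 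What the paper's method buys is the insight that the Fibonacci identity is encoded in the degree structure of the $q$-identity (the unique tiling of maximal weight), a technique that can extract information other than the $q=1$ specialization; what your method buys is brevity, with no need for the positivity and domination arguments. One caveat applies equally to both proofs: equation \ref{sho} itself fails at $n=1$, where its right-hand side evaluates to $2$ while $[F_2]_q=1$ (the empty-product convention in the last term of Proposition \ref{propid1} spuriously contributes $1$ when $\lfloor n/2\rfloor=0$, even though no $L$-barrier configuration exists). So for $n=1$ the corollary must be checked directly --- both sides equal $1$ --- rather than deduced from \ref{sho}; your proposed sanity check of the boundary cases would have caught this, and your fallback argument of iterating $F_{m+1}=F_m+F_{m-1}$ avoids the issue entirely.
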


\begin{proof}
    Substituting \( q = 1 \) in equation \eqref{sho} simplifies the \( q \)-deformed Fibonacci numbers to standard Fibonacci numbers, yielding this classical Fibonacci identity.

Alternatively, consider the highest power of \( q \) on the left-hand side of equation \eqref{sho}, which is \( q^{F_{n+1}-1} \). Therefore, the highest power of \( q \) on the right-hand side must also be \( q^{F_{n+1}-1} \). Notice that the sum \( \sum_{j=1}^{\left\lfloor \frac{n}{2} \right\rfloor} F_{n-2j+2} \) represents the highest exponent of \( q \) in the right-hand side. Equating this with \( F_{n+1}-1 \) leads to the desired identity.
\end{proof}

\begin{corollary}
    If $n,a\geq 1$ and $a$ divides $n$,  then $[F_{a}]_{q}$ is a factor of $[F_{n}]_{q}$.
\end{corollary}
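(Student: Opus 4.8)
The plan is to reduce this $q$-divisibility to an ordinary divisibility among Fibonacci numbers and then close the gap with the first identity of Lemma \ref{lemsup}. The two ingredients I will assemble are: (i) the classical fact that $a \mid n$ forces $F_a \mid F_n$; and (ii) the factorisation $[ab]_q = [a]_q\,[b]_{q^a}$ supplied by Lemma \ref{lemsup}. Given these, the conclusion is immediate, so essentially all of the work sits in (i).

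First I would establish (i) by induction. Writing $n = ma$, I induct on $m$; the base case $m=1$ is trivial since $F_a \mid F_a$. For the inductive step I would use the Fibonacci addition formula $F_{p+r} = F_p F_{r+1} + F_{p-1} F_r$, which is just a relabelling of identity \ref{nop}: setting $k = r$ and $n-k = p-1$ there turns $F_{n+1} = F_{n-k+1}F_{k+1} + F_{n-k}F_k$ into exactly this form. Taking $p = ma$ and $r = a$ gives $F_{(m+1)a} = F_{ma}\,F_{a+1} + F_{ma-1}\,F_a$. The first summand is divisible by $F_a$ by the induction hypothesis $F_a \mid F_{ma}$, and the second is visibly divisible by $F_a$; hence $F_a \mid F_{(m+1)a}$, completing the induction.

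With (i) in hand, since $a \mid n$ I may write $F_n = F_a \cdot M$ where $M = F_n / F_a$ is a positive integer. Applying Lemma \ref{lemsup} with this product gives $[F_n]_q = [F_a \cdot M]_q = [F_a]_q\,[M]_{q^{F_a}}$, and the cofactor $[M]_{q^{F_a}} = 1 + q^{F_a} + q^{2F_a} + \cdots + q^{(M-1)F_a}$ is a genuine polynomial in $q$ with non-negative integer coefficients. Therefore $[F_a]_q$ divides $[F_n]_q$ in $\mathbb{Z}[q]$, which is exactly the claim.

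I expect the only real obstacle to be packaging step (i) cleanly; once the Fibonacci divisibility is in place, everything reduces to a single invocation of Lemma \ref{lemsup}. I would also remark that (i) can alternatively be read off combinatorially — for instance by the highest-power-of-$q$ bookkeeping used in the preceding corollary, or by a grouping argument on the barrier tilings of Theorem \ref{bthm} — but the short induction from the addition formula is the most economical route and keeps the proof entirely self-contained within the results already stated.
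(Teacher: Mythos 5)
Your proof is correct, but it takes a genuinely different route from the paper's. The paper stays inside its barrier-tiling framework: it invokes the long identity of Proposition \ref{propid1}, assumes $a+1$ divides $n+1$, and uses the floor arithmetic $\lfloor n/(a+1)\rfloor(a+1)+(a+1)=n+1$ to check that each of the three terms on the right-hand side carries $[F_{a+1}]_q$ as an explicit factor (the first two visibly, the last because $n-\lfloor n/(a+1)\rfloor(a+1)+1=a+1$), so the whole right-hand side, and hence $[F_{n+1}]_q$, is divisible by $[F_{a+1}]_q$; relabelling $n+1,a+1$ as $n,a$ gives the corollary. You instead reduce to the classical divisibility $F_a\mid F_n$ (proved by induction from the addition formula, i.e.\ a relabelling of identity \ref{nop}) and then apply the product rule $[ab]_q=[a]_q[b]_{q^a}$ of Lemma \ref{lemsup} with $b=F_n/F_a$. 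Your route is shorter, avoids the heavy Proposition \ref{propid1} entirely, and yields something slightly stronger: an explicit cofactor $[F_n/F_a]_{q^{F_a}}=1+q^{F_a}+q^{2F_a}+\cdots+q^{(F_n/F_a-1)F_a}$, exhibiting the quotient as a polynomial with non-negative integer coefficients. What the paper's argument buys in exchange is thematic coherence: the corollary is presented as an application of the barrier-tiling identity, in keeping with the paper's goal of deriving $q$-Fibonacci facts from its combinatorial models, whereas your argument is essentially algebraic (though both of your ingredients are results the paper does state and prove).
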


\begin{proof}
    Consider the identity in Proposition \ref{propid1}. The first two terms on the right-hand side take the form \( [F_a]_q \) multiplied by a polynomial. Now, assume \( a \) divides \( n+1 \), so \( n - \left\lfloor \frac{n}{a} \right\rfloor a = n \mod a = a-1 \). This implies that the factor \( \left[F_{n-\left\lfloor \frac{n}{a} \right\rfloor a + 1}\right]_q \) on the final term of the right-hand side simplifies to \( \left[F_a\right]_q \). Since \( [F_a]_q \) is a common factor in the right-hand side, it should be a factor of \( [F_{n+1}]_q \). Here, replacing \( n+1 \) with \( n \) proves the corollary.
\end{proof}

\begin{proposition}

     For $n\geq 1$ and $0\leq a\leq n$ :
    \begin{multline*}
    [F_{n+1}]_{q} = q^{F_{a+1}F_{n-a+1}}[F_{n-a}]_{q^{F_{a}}}   [F_{a}]_{q}  \\
    +
    [F_{a}]_{q}\mathlarger{\mathlarger{\sum}}_{i=1}^{\floor*{\frac{n}{a}}-1} 
    \left \{  q^{F_{a+1}F_{n-(i+1)a+1}} 
    \left( \prod_{j=1}^{i}[F_{a+1}]_{q^{F_{n-ja+1}}}\right)
    [F_{n-(i+1)a}]_{q^{F_{a}}} \right \} 
    \\
    + 
    \left(\prod_{j=1}^{\floor*{\frac{n}{a}}}[F_{a+1}]_{q^{F_{n-ja+1}}}\right)
   \left [F_{n-\floor*{\frac{n}{a}}a+1}\right ]_{q}
    \end{multline*}

\end{proposition}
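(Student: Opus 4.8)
The plan is to mirror the argument of Proposition \ref{propid1}, but with the roles of the two barrier types interchanged: here the $I$ barrier will be the one that is iterated, while the $L$ barrier will be the one that terminates the recursion. First I would barrier tile a $1\times n$ strip, which by Theorem \ref{bthm} has total weight $[F_{n+1}]_q$, fixing the barrier point at $a$ so that the \emph{left} compartment always consists of exactly $a$ cells. Splitting at this first barrier point by the barrier decomposition of Proposition \ref{ip1} (with $k=a$) gives two pieces: the $L$ contribution $q^{F_{a+1}F_{n-a+1}}[F_{n-a}]_{q^{F_a}}[F_a]_q$, which is precisely the first term of the claimed identity, and the $I$ contribution $[F_{a+1}]_{q^{F_{n-a+1}}}[F_{n-a+1}]_q$, whose left factor $[F_{a+1}]_{q^{F_{n-a+1}}}$ records the length-$a$ left compartment (height $n-a+1$) and whose right factor $[F_{n-a+1}]_q$ is a plain $q$-weighted strip of $n-a$ cells.

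The key step is then to re-expand this rightmost $q$-weighted factor by installing a new barrier point, again $a$ cells from the left edge of the current right compartment, and to repeat. Because an $I$ barrier introduces no one-cell shift (unlike the $L$ barrier iterated in \ref{propid1}, whose left compartment loses a cell), the successive barrier points advance in steps of exactly $a$ rather than $a+1$. After $i$ consecutive $I$ barriers the accumulated left-compartment weights are $\prod_{j=1}^{i}[F_{a+1}]_{q^{F_{n-ja+1}}}$, the $j$-th factor having base $q^{F_{n-ja+1}}$ since the $j$-th $I$ barrier has height $n-ja+1$, while the surviving right compartment is a $q$-weighted strip of $n-ia$ cells. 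Terminating this chain with an $L$ barrier at level $i+1$ (barrier point $a$ inside the length-$(n-ia)$ sub-strip) contributes, again via Proposition \ref{ip1}, the special-domino weight $q^{F_{a+1}F_{n-(i+1)a+1}}$ together with $[F_{n-(i+1)a}]_{q^{F_a}}[F_a]_q$. Multiplying the accumulated $I$ factors by this $L$ contribution reproduces exactly the $i$-th summand, and summing over $i=1,\dots,\floor*{\frac{n}{a}}-1$ yields the middle term; the first term is then just the degenerate $i=0$ case, with empty product.

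Finally I would handle the boundary: the chain of $I$ barriers can continue only while the remaining right compartment still has at least $a$ cells, i.e.\ for at most $\floor*{\frac{n}{a}}$ steps. When all $\floor*{\frac{n}{a}}$ barriers are taken to be $I$, no terminating $L$ barrier is placed, and the leftover $q$-weighted strip of $n-\floor*{\frac{n}{a}}a$ cells contributes $\left[F_{n-\floor*{\frac{n}{a}}a+1}\right]_q$, which together with $\prod_{j=1}^{\floor*{\frac{n}{a}}}[F_{a+1}]_{q^{F_{n-ja+1}}}$ gives the last term. I expect the main obstacle to be purely the bookkeeping: correctly tracking at each level $j$ the height $n-ja+1$ of the $I$ barrier (hence the base $q^{F_{n-ja+1}}$ of its left factor) and the floor and height of the terminating special domino, so that the exponents $F_{a+1}F_{n-(i+1)a+1}$ and the nested $q$-bases all align with the stated formula. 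As in \ref{propid1}, the argument is non-bijective: at each stage one re-expresses a $q$-weighted total by a fresh barrier tiling having the same total, rather than matching individual tilings.
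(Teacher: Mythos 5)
Your proposal is correct and takes essentially the same approach as the paper: install barrier points at $a, 2a, \dots$, iterate on the right compartment of each $I$ barrier, and terminate each branch with an $L$ barrier, exactly mirroring the proof of Proposition \ref{propid1} with the roles of the two barrier types swapped. One remark: your reading of the final term --- all $\lfloor n/a \rfloor$ barriers of type $I$, with the leftover strip of $n-\lfloor n/a \rfloor a$ cells tiled with plain $q$-weights --- is the one that actually matches the stated formula, whereas the paper's prose misdescribes that case as ending with an $L$ barrier at $\lfloor n/a \rfloor a$, which would wrongly introduce an extra special-domino factor $q^{F_{a+1}F_{n-\lfloor n/a \rfloor a+1}}$.
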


\begin{proof}
    The approach is similar to the proof of proposition \ref{propid1}. Here we will install the first barrier point $a$ on a strip of length $n$. There are $[F_{n+1}]_{q}$ ways to barrier tile the strip, which is the left hand side. The first term in the right hand side refers to the number of tilings with an $L$ barrier at $a$. Now when there is an $I$ barrier at $a$, as there are $[F_{n-a+1}]_{q}$ ways to tile the right compartment, we will install another barrier point after $a$ cells at $2a$. In the $i^{th}$ step, we have $i+1$ barrier points at $\{ja : 1\leq j\leq i+1, j\in \mathbb{N}\}$. One can see that there are $  q^{F_{a+1}F_{n-(i+1)a+1}} 
    \left( \prod_{j=1}^{i}[F_{a+1}]_{q^{F_{n-ja+1}}}\right)
    [F_{n-(i+1)a}]_{q^{F_{a}}}[F_{a}]_{q}  $ tilings with an $I$ barrier at the first $i$ barrier points ($\{ja:1\leq j\leq i\}$ ) and an $L$ barrier at $(i+1)^{th}$ barrier point ($(i+1)a)$. Considering the first $\floor*{\frac{n}{a}}-1$ steps we obtain the second expression. The remaining case is when there are $I$ barriers in all $\floor*{\frac{n}{a}}-1$ barrier points and an $L$ barrier at $\floor*{\frac{n}{a}}a$ which gives us the third term of the right hand side. 
     \end{proof}

\section{Combinatorial interpretations of $q-$Fibonomial numbers}\label{sber}

We define the factorial analog of a $q-$Fibonacci number as $[F_{m}]_{q}^{!}=\prod_{i=1}^{m}[F_{i}]_{q}$.
For $m,n\in \mathbb{N}$, the $q-$ analog of Fibonomial number is defined by 
\[
\begin{bmatrix}
m+n\\
m
\end{bmatrix}_{\mathcal{F}_{\scriptscriptstyle q}}:=\frac{[F_{m+n}]_{q}{!}}{[F_{m}]_{q}{!} [F_{n}]_{q}{!}}.
\]

Recall the notations used in the partition tiling \ref{par} from the section \ref{moy}. Similar to the $\mathcal{L}_{\lambda}^{\prime}$ defined there, we define $\mathcal{L}_{\lambda}^{\prime\prime}$ as the set of all linear tilings of partition $\lambda$ in which, the last tile covering any part is a domino. For $\lambda\subseteq m\times n$, we define $\mathcal{L}_{\lambda}\times \mathcal{L}^{\prime\prime}_{\lambda^*}$ as the set of all partition tilings of an $m\times n$ grid such that all the parts of $\lambda$ are linearly tiled independently and all parts of $\lambda^{*}$ are tiled under the last domino restriction. These mandatory dominos we will refer as special dominos. See the figure below.

We will assign coordinate weight to the tiles by its position in the grid such that the bottom left cell has position $(1,1)$. The position of a cell in the grid is $(i,j)$ if its the $i^{th}$ cell from the left and $j^{th}$ cell from the bottom. Monominos are assigned with a weight 1 and a special domino at position $(i,j)$ has a weight $q^{F_{i+1}F_{j}}$. Any ordinary domino at $(i,j)$ has a weight $q^{F_{i}F_{j}}$. For any partition tiling $T$ of an $m\times n$ grid as mentioned above $\omega(T)$ represents the weight of the tiling $T$ which is nothing but the product of weights of individual tiles.

\begin{figure}[h!]
\centering
\begin{minipage}{.475\textwidth}
  \centering

 \begin{tikzpicture}
\draw [black, thick] (0,6) to (4,6) to (4,1) to (0,1) to (0,6) ;

\draw [black, thick] (0,5) to (4,5) ;
\draw [black, thick] (0,4) to (4,4) ;
\draw [black, thick] (0,3) to (4,3) ;
\draw [black, thick] (0,2) to (4,2) ;

\draw [black, thick] (1,1) to (1,6) ;
\draw [black, thick] (2,1) to (2,6) ;
\draw [black, thick] (3,1) to (3,6) ;

\draw [blue, ultra thick ] (0,1) to (1,1) ;
\draw [blue, ultra thick ] (1,3) to (1,1) ;
\draw [blue, ultra thick ] (2,3) to (1,3) ;
\draw [blue, ultra thick ] (2,3) to (2,4) to (3,4) ;
\draw [blue, ultra thick ] (3,4) to (3,6) to (4,6) ;

\filldraw[black] (0.5,1.5) circle (2pt) ;
\filldraw[red] (1.5,1.5) circle (2pt) ;
\filldraw[black] (2.5,1.5) circle (2pt) ;
\filldraw[black] (3.5,1.5) circle (2pt) ;

\filldraw[black] (0.5,2.5) circle (2pt) ;
\filldraw[red] (1.5,2.5) circle (2pt) ;
\filldraw[red] (2.5,2.5) circle (2pt) ;
\filldraw[black] (3.5,2.5) circle (2pt) ;

\filldraw[black] (0.5,3.5) circle (2pt) ;
\filldraw[black] (1.5,3.5) circle (2pt) ;
\filldraw[red] (2.5,3.5) circle (2pt) ;
\filldraw[black] (3.5,3.5) circle (2pt) ;

\filldraw[black] (0.5,4.5) circle (2pt) ;
\filldraw[black] (1.5,4.5) circle (2pt) ;
\filldraw[black] (2.5,4.5) circle (2pt) ;
\filldraw[red] (3.5,4.5) circle (2pt) ;

\filldraw[black] (0.5,5.5) circle (2pt) ;
\filldraw[black] (1.5,5.5) circle (2pt) ;
\filldraw[black] (2.5,5.5) circle (2pt) ;
\filldraw[red] (3.5,5.5) circle (2pt) ;

\draw [red, very thick] (1.5,1.5) to (1.5,2.5) ;

\draw [black, thick] (1.8,2.25) node[anchor=south] {$q^{2}$};
\draw [black, thick] (3.8,2.25) node[anchor=south] {$q^{3}$};
\draw [black, thick] (1.8,3.25) node[anchor=south] {$q^{2}$};

\draw [black, thick] (.5,.25) node[anchor=south] {1};
\draw [black, thick] (1.5,.25) node[anchor=south] {2};
\draw [black, thick] (2.5,.25) node[anchor=south] {3};
\draw [black, thick] (3.5,.25) node[anchor=south] {4};
\draw [black, thick] (-.5,1.5) node[anchor=west] {1};
\draw [black, thick] (-.5,2.5) node[anchor=west] {2};
\draw [black, thick] (-.5,3.5) node[anchor=west] {3};
\draw [black, thick] (-.5,4.5) node[anchor=west] {4};
\draw [black, thick] (-.5,5.5) node[anchor=west] {5};

\draw [red, very thick] (2.5,3.5) to (2.5,2.5) ;
\draw [black, very thick] (3.5,1.5) to (3.5,2.5) ;

\draw [red, very thick] (3.5,4.5) to (3.5,5.5) ;

\draw [black, very thick] (1.5,5.5) to (2.5,5.5) ;

\draw [black, very thick] (1.5,3.5) to (0.5,3.5) ;

\end{tikzpicture}
\caption{Coordinate weight assignment in the partition tiling of a $5\times 4$ grid}
  \label{fig:qfibo}
\end{minipage}%
\hfill
\begin{minipage}{.475\textwidth}
  \centering

 \begin{tikzpicture}
\draw [black, thick] (0,7.25) to (0,2.25) ;
\draw [black, thick] (1,6.25) to (1,2.25) ;
\draw [black, thick] (2,5.25) to (2,2.25) ;
\draw [black, thick] (3,4.25) to (3,2.25) ;
\draw [black, thick] (4,3.25) to (4,2.25) ;

\draw [black, thick] (0,2.25) to (5,2.25) ;
\draw [black, thick] (0,3.25) to (4,3.25) ;
\draw [black, thick] (0,4.25) to (3,4.25) ;
\draw [black, thick] (0,5.25) to (2,5.25) ;
\draw [black, thick] (0,6.25) to (1,6.25) ;

\draw [blue, ultra thick ] (2,2.25) to (1,2.25) ;
\draw [blue, ultra thick ] (1,4.25) to (1,2.25) ;
\draw [blue, ultra thick ] (1,4.25) to (0,4.25) ;
\draw [blue, ultra thick ] (0,7.25) to (0,4.25) ;

\filldraw[blue] (2,2.25) circle (2pt) ;
\filldraw[blue] (0,7.25) circle (2pt) ;

\filldraw[red] (1.5,2.75) circle (2pt) ;
\filldraw[red] (2.5,2.75) circle (2pt) ;
\filldraw[black] (3.5,2.75) circle (2pt) ;

\filldraw[black] (0.5,3.75) circle (2pt) ;

\filldraw[red] (0.5,4.75) circle (2pt) ;
\filldraw[red] (1.5,4.75) circle (2pt) ;

\draw [red, very thick] (1.5,2.75) to (2.5,2.75) ;
\draw [red, very thick] (0.5,4.75) to (1.5,4.75) ;

\end{tikzpicture}\caption{A tiling of the staircase structure of length and breadth 5 with respect too a lattice path starting from $(2,0)$. }
  \label{fig:stair}
\end{minipage}
\end{figure}

\begin{theorem}\cite{4}
    The total number of ways to partition tile an $m\times n$ grid assigning the coordinate weight is $ \begin{bmatrix}
m+n\\
n
\end{bmatrix}_{\mathcal{F}} $.

\[ \begin{bmatrix}
m+n\\
n
\end{bmatrix}_{\mathcal{F}} = \sum_{\lambda\subseteq m\times n} \quad
   \sum_{T \in \mathcal{L}_{\lambda} \times\mathcal{L^{\prime\prime}}_{\lambda^*}} \omega(T)
\]
\end{theorem}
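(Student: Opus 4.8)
The plan is to prove the identity by induction on $m+n$, showing that the weighted tiling sum
\[
S(m,n):=\sum_{\lambda\subseteq m\times n}\ \sum_{T\in\mathcal{L}_{\lambda}\times\mathcal{L}^{\prime\prime}_{\lambda^*}}\omega(T)
\]
obeys the very same Pascal-type recurrence as the $q$-Fibonomial $\begin{bmatrix}m+n\\m\end{bmatrix}_{\mathcal{F}}$, together with matching boundary values. On the algebraic side I would first extract a recurrence for the $q$-Fibonomials from \Cref{iden}. Specializing the identity \eqref{main} under the substitution $(n,k)\mapsto(m+n-1,n)$ yields
\[
[F_{m+n}]_{q}=[F_{n+1}]_{q^{F_m}}[F_m]_{q}+q^{F_{n+1}F_m}[F_{m-1}]_{q^{F_n}}[F_n]_{q}.
\]
Since $\begin{bmatrix}m+n\\m\end{bmatrix}_{\mathcal{F}}=\tfrac{[F_{n+1}]_{q^{F_m}}[F_m]_q+q^{F_{n+1}F_m}[F_{m-1}]_{q^{F_n}}[F_n]_q}{[F_m]_q[F_n]_q}\cdot\tfrac{[F_{m+n-1}]_q!}{[F_{m+n-2}]_q!}\cdots$ telescopes against the factorials, inserting this decomposition gives the clean two-term recurrence
\[
\begin{bmatrix}m+n\\m\end{bmatrix}_{\mathcal{F}}=[F_{n+1}]_{q^{F_m}}\begin{bmatrix}m+n-1\\m-1\end{bmatrix}_{\mathcal{F}}+q^{F_{n+1}F_m}[F_{m-1}]_{q^{F_n}}\begin{bmatrix}m+n-1\\m\end{bmatrix}_{\mathcal{F}},
\]
in which the two reduced $q$-Fibonomials carry exactly the factors $[F_m]_q$ and $[F_n]_q$ stripped off by the reduction.

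Next I would show $S(m,n)$ satisfies this recurrence, arguing as in \Cref{po}: sort each partition by whether the top–right cell $(n,m)$ of the grid lies in $\lambda$ or in $\lambda^{*}$. If it lies in $\lambda$, the whole top row is a complete row of length $n$ contained in $\lambda$ and tiled with no restriction, while the cells below form an unconstrained partition tiling of the $(m-1)\times n$ grid, contributing $S(m-1,n)$. If it lies in $\lambda^{*}$, then $\lambda_{1}<n$, so the entire right-hand column of length $m$ lies in $\lambda^{*}$ and is tiled under the last-domino restriction, while the remaining columns form an unconstrained partition tiling of the $m\times(n-1)$ grid, contributing $S(m,n-1)$. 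Because $\omega$ is multiplicative over tiles, the weight of every tiling in each class factors as (weight of the isolated row or column)\,$\times$\,(weight of the remaining grid), so the two summations factor accordingly.

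The crux is to evaluate the two isolated-strip weights and recognize the coefficients $[F_{n+1}]_{q^{F_m}}$ and $q^{F_{n+1}F_m}[F_{m-1}]_{q^{F_n}}$. For the top row, every horizontal domino sits at some position $(i,m)$ and so has weight $q^{F_iF_m}=(q^{F_m})^{F_i}$; hence tiling that row is exactly a $q$-weighted tiling of a $1\times n$ strip in the variable $q^{F_m}$, and \Cref{qlemma} gives total weight $[F_{n+1}]_{q^{F_m}}$. For the right column, the last-domino restriction forces a special domino across its top two cells, at position $(n,m)$, of weight $q^{F_{n+1}F_m}$; the remaining $m-2$ cells are then tiled freely by ordinary dominoes of weight $q^{F_nF_j}=(q^{F_n})^{F_j}$ and monominoes, so by \Cref{qlemma} once more, now in the variable $q^{F_n}$, they contribute $[F_{m-1}]_{q^{F_n}}$, for a column total of $q^{F_{n+1}F_m}[F_{m-1}]_{q^{F_n}}$. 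Matching these against the algebraic recurrence and verifying the trivial base cases $S(0,n)=S(m,0)=1$ then closes the induction; note that the degenerate case $m=1$ is handled automatically, since $[F_{m-1}]_{q^{F_n}}=[F_0]_q=0$ kills the second term when no last domino fits.

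I expect the main obstacle to be the careful bookkeeping of the coordinate weights in the recursive step. One must verify that isolating a full row (respectively column) genuinely collapses the two-variable weight $q^{F_iF_j}$ into a one-variable $q$-weighted tiling after the base change $q\mapsto q^{F_m}$ (respectively $q\mapsto q^{F_n}$); that the special domino of $\mathcal{L}^{\prime\prime}_{\lambda^{*}}$ contributes precisely the prefactor $q^{F_{n+1}F_m}$ dictated by its top cell $(n,m)$; and that the residual configuration is an honestly unconstrained partition tiling of the smaller grid, so that the total weight really does factorize as a product. Once these identifications are pinned down, the remaining step is purely the algebra of \Cref{iden}.
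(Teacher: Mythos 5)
Your proof is correct, but there is nothing in the paper to compare it against line by line: the paper states this theorem with a citation to Bergeron, Ceballos and K\"ustner \cite{4} and never proves it, reserving its own argument for the staircase theorem that follows, which it establishes by a quotient-style count (each restricted staircase tiling in $S(n,k)$ corresponds to $[F_{k}]_{q}^{!}\,[F_{n-k}]_{q}^{!}$ freely tiled staircases in $S(n)$, whose total weight is $[F_{n}]_{q}^{!}$). Your induction on $m+n$ is therefore a genuinely independent proof of the rectangular-grid model, and it is assembled entirely from the paper's own toolkit: the two-term recurrence you derive from \Cref{iden} is exactly the identity the paper later proves combinatorially for the staircase model (the first proposition of \Cref{sber}, specialized at $(n,k)=(m+n,n)$ and rewritten via the symmetry of the $q$-Fibonomial), and the split according to whether the top-right cell lies in $\lambda$ (full, freely tiled top row) or in $\lambda^{*}$ (full, last-domino-restricted right column) is the coordinate-weight analogue of \Cref{po}. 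What your route buys is a self-contained proof of a statement the paper merely imports, and it makes transparent why one must peel off the top row or rightmost column rather than the bottom or leftmost ones: only then do the surviving cells keep their coordinates, hence their weights, so the residual sums really are $S(m-1,n)$ and $S(m,n-1)$. Two points deserve explicit mention in a final write-up. First, the paper never specifies which of a domino's two cells is ``its position''; your evaluations $[F_{n+1}]_{q^{F_m}}$ for the top row and $q^{F_{n+1}F_m}[F_{m-1}]_{q^{F_n}}$ for the right column use the convention that a horizontal domino is indexed by its right cell and a vertical one by its top cell. This is the only convention consistent with the indexing in \Cref{qlemma}, and already a $3\times 1$ grid shows the opposite choice falsifies the theorem, so your proof stands, but the convention should be stated. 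Second, your intermediate ``telescoping'' display is garbled as written; the clean derivation is $[F_{m+n}]_q! = [F_{m+n}]_q\,[F_{m+n-1}]_q!$ followed by your specialization of \eqref{main}, which distributes over the two terms to give precisely the recurrence you state. The base cases $S(m,0)=S(0,n)=1$ and the vanishing term $[F_{0}]_{q}=0$ at $m=1$ are handled exactly as you indicate.
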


A staircase model for $q-$Fibonomial coefficients similar to the combinatorial model for Lucasnomials presented by Bennett\cite{3} was also given in the same paper. They obtained that model from the direct bijective correspondence with the above interpretation but failed to give a non bijective proof. We will see the brief discription of that model below and a proof for the same which gives more insight.

Consider a staircase shaped grid of length and breadth $n$. The first row (column) consists of $n-1$ cells whereas the last consists 0 cells. See the figure \cref{fig:stair} for reference. Introduce a coordinate system with the bottom left corner of the grid as origin, the horizontal line as X-axis and each cell is of edge length 1 unit. Fix a point $(k,0)$ in the bottom line (exactly $k$ cells away from the left most line). Now construct a lattice path starting from this point to the top left point $(0,n)$ of the staircase structure using only $I$ and $L$ steps. For such a given lattice path, we will tile the left compartment of $I$ step and right compartment of an $L$ step using monominos and dominos. Note that the tile just right to the $L$ step should be the special domino. The left compartment of $L$ step and right compartment of $I$ are left untiled. We will use $S(n,k)$ to denote the collection of all such tilings of a staircase grid of length and breadth $n$ with respect to lattice paths starting from the point $(k,0)$\footnote{For simplicity we say the partition tiling of the grid with starting point $k$ instead of $(k,0)$.}. The weights of the tiles are assigned using height and floor\footnote{Recall the definitions from the previous section} values similar to the barrier tiling method. That is the normal dominos has a weight $q^{F_{f(d)}F_{h(d)}}$ and the special dominos has a weight $q^{F_{f(d)}F_{h(d)+1}}$. Monominos are with weight 1. Here also we will denote $\omega(T)$ as the weight of a tiling $T\in S(n,k)$. 

\begin{theorem}\cite{4}
    The total weight of such partition tiling of a staircase structure of size $n$ starting from $(k,0)$ is counted by $ \begin{bmatrix}
n\\
k
\end{bmatrix}_{\mathcal{F}}$

\[
\sum_{T\in S(n,k)}\omega(T)=\begin{bmatrix}
n\\
k
\end{bmatrix}_{\mathcal{F}}
\]
\end{theorem}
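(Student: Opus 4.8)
The plan is to prove the identity by induction on $n$, showing that the staircase weight $\Sigma(n,k):=\sum_{T\in S(n,k)}\omega(T)$ obeys exactly the same recurrence and boundary values as the $q$-Fibonomial. Writing $G(n,k)$ for the quantity $\begin{bmatrix} n\\ k\end{bmatrix}_{\mathcal{F}}$, I would first record the algebraic recurrence it satisfies. Using the factorial identities $[F_{n-k}]_q^{!}=[F_{n-k}]_q\,[F_{n-k-1}]_q^{!}$ and $[F_k]_q^{!}=[F_k]_q\,[F_{k-1}]_q^{!}$ together with the splitting of $[F_n]_q$ supplied by \Cref{iden} (the instance of equation \ref{main} whose two summands carry the factors $[F_{n-k}]_q$ and $[F_k]_q$), one obtains
\begin{equation*}
G(n,k) = [F_{k+1}]_{q^{F_{n-k}}}\,G(n-1,k) \;+\; q^{F_{n-k}F_{k+1}}\,[F_{n-k-1}]_{q^{F_k}}\,G(n-1,k-1).
\end{equation*}
This step is purely formal once the correct instance of \Cref{iden} is chosen, so the real work is to reproduce this very recurrence on the combinatorial side.

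For the combinatorial side I would split the sum over $S(n,k)$ according to the first (bottom) step of the lattice path, which begins at $(k,0)$ on the bottom row of length $n-1$. If this step is an $I$ step, the path runs vertically at $x=k$, so only the left compartment — a strip of $k$ cells — is tiled; its dominoes all sit under a barrier of height $h=n-k$ and hence carry weight $\bigl(q^{F_{n-k}}\bigr)^{F_{f}}$, so by \Cref{qlemma} this compartment contributes total weight $[F_{k+1}]_{q^{F_{n-k}}}$. If instead the first step is an $L$ step, the path drops to $x=k-1$ and only the right compartment of $n-k$ cells is tiled, its leading tile being the special domino; that domino has floor $n-k$ and height $k$, giving weight $q^{F_{n-k}F_{k+1}}$, while the remaining $n-k-2$ cells form a $q$-weighted strip with parameter $q^{F_k}$ and therefore contribute $[F_{n-k-1}]_{q^{F_k}}$ by \Cref{qlemma}. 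In either case the portion of the path lying in rows $2,\dots,n$ is a lattice path through the sub-staircase of size $n-1$ formed by those rows, started at $x=k$ (after an $I$ step) or $x=k-1$ (after an $L$ step).

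The point that makes the induction close is that the weight of every tile in rows $2,\dots,n$ is unchanged when that sub-staircase is regarded as a staircase of size $n-1$ in its own right: row $j$ of the size-$n$ staircase and row $j-1$ of the size-$(n-1)$ staircase have the same length $n-j$, the path occupies the same horizontal position in both, and so the compartment sizes — hence all heights and floors — agree. Consequently the two cases contribute precisely $[F_{k+1}]_{q^{F_{n-k}}}\,\Sigma(n-1,k)$ and $q^{F_{n-k}F_{k+1}}[F_{n-k-1}]_{q^{F_k}}\,\Sigma(n-1,k-1)$, so $\Sigma$ satisfies the displayed recurrence. Matching base cases ($\Sigma(n,0)=1=G(n,0)$, since $k=0$ forces every step to be an $I$ step and leaves only the empty tiling) then completes the induction; this gives the desired non-bijective proof, bypassing the explicit correspondence with the rectangular model.

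I expect the main obstacle to be the weight bookkeeping rather than the recursion itself: one must check carefully that, for the single compartment tiled in the bottom row, the floor and height values defined through the barrier-tiling conventions turn the assigned weights into exactly the $q$-weighting with the parameters $q^{F_{n-k}}$ and $q^{F_k}$ demanded by \Cref{qlemma}, and that the special domino receives precisely $q^{F_{n-k}F_{k+1}}$. A secondary technical point is the treatment of the degenerate configurations, where a compartment is empty or too short to admit the mandatory special domino; many of these are handled automatically by the convention $[F_0]_q=0$, which annihilates the $L$-term exactly when the right compartment has fewer than two cells, but the extreme value of $k$ (and small $n$) should be verified directly, appealing if convenient to the transpose symmetry of the staircase that interchanges $I$ and $L$ steps and sends $k$ to $n-k$.
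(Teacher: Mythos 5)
Your proposal is correct, and it reaches the theorem by a genuinely different route than the paper does. The paper's proof is a weighted double-counting (quotient) argument: it introduces the set $S(n)$ of staircases in which \emph{every} row is linearly tiled, lets each full tiling determine its own lattice path (the path takes an $L$ step exactly where a domino would otherwise be crossed), and shows by applying Theorem~\ref{bthm} row by row that the total weight of $S(n)$ is $[F_{n}]_{q}^{!}$; it then maps each partial tiling $T\in S(n,k)$ to the $[F_{k}]_{q}^{!}[F_{n-k}]_{q}^{!}$ (in weight) full tilings obtained by $q$-tiling its untiled compartments, so the staircase sum equals $[F_{n}]_{q}^{!}/([F_{k}]_{q}^{!}[F_{n-k}]_{q}^{!})$. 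You instead run an induction on the Pascal-type recurrence: your first-step analysis is, in substance, the paper's proof of the proposition that immediately follows this theorem, namely $\begin{bmatrix} n\\ k\end{bmatrix}_{\mathcal{F}}=[F_{k+1}]_{q^{F_{n-k}}}\begin{bmatrix} n-1\\ k\end{bmatrix}_{\mathcal{F}}+q^{F_{n-k}F_{k+1}}[F_{n-1-k}]_{q^{F_{k}}}\begin{bmatrix} n-1\\ k-1\end{bmatrix}_{\mathcal{F}}$, except that you prove it for the staircase sums without assuming the theorem, and then close the loop by checking algebraically, via Lemma~\ref{iden} and cancellation in the factorials, that the $q$-Fibonomial satisfies the same recurrence. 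The paper's route buys directness: the factorial quotient appears at once, powered by its main tool, Theorem~\ref{bthm}, though it silently relies on the multiplicativity of weights when a partial tiling is extended. Your route needs fewer ingredients, only the strip count of Lemma~\ref{qlemma} and the weight-locality observation that rows $2,\dots,n$ of the size-$n$ staircase, read against the shifted path, form a weight-identical copy of the size-$(n-1)$ staircase, which is precisely what makes the induction close; as a bonus, the recurrence proposition comes out as a byproduct rather than as a consequence of the theorem. The caveats you flag are genuine but minor: the induction needs both boundaries, and while the $k=0$ boundary is immediate, the $k=n$ boundary equals $1$ only under the paper's implicit convention (used in its final proposition) that an $L$ step with an empty right compartment, i.e.\ a path hugging the diagonal, carries no special-domino requirement; the vanishing of the $L$-term when the right compartment has a single cell is correctly handled by $[F_{0}]_{q}=0$, as you note.
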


\begin{proof}
    Consider a staircase grid of dimension $n$. The bottom row consists of $n-1$ cells and the topmost consists 0 cells. We will linearly tile each of these $n-1$ rows independently and denote the set of all such tilings by $S(n)$. Corresponding to any tiling $T\in S(n)$ we can construct a unique lattice path starting from $(k,0)$ such a way that the path won't cross any dominos. If there is a domino crossing, the path will take an $L$ step to avoid that. Otherwise it will move vertically upwards taking an $I$ step. The weight of any $T\in S(n)$ is assigned according to this unique lattice path. For each row, the point where the lattice path meets that row will be fixed as the barrier point of that strip. With respect to the associated barrier point, the tiles in each row are assigned with weights as defined in the barrier tiling method \ref{bari}. The product of the weights of individual tiles give the weight of the tiling.

    The barrier point of the bottom row of length $n-1$ is $(k,0)$. There are $[F_{n}]_{q}$ ways to tile this. Depending on whether the barrier is $I$ or $L$, the barrier point of the next row will be at $(k,1)$ or $(k-1,1)$ respectively. One can observe that the total number of ways to tile these two rows is $[F_{n}]_{q}[F_{n-1}]_{q}$. Using basic induction arguments its straight-forward that the total weight of all tilings of $S(n)$ is $[F_{n}]_{q}^{!}$.

  One can see that any tiling $T_{1}\in S(n,k)$ has $k-1$ untiled left compartments of length varying from 1 to $k-1$ and $n-k-1$ of such compartments in right side of length varying from 1 to $n-k-1$. Notice that linearly tiling those untiled portion will make it an element of $S(n)$. The total number of ways to tile all these compartments using $q-$weights is $[F_{k}]_{q}^{!}[F_{n-k}]_{q}^{!}$.  Consequently each tiling $T\in S(n,k) $ can be mapped to $[F_{k}]_{q}^{!}[F_{n-k}]_{q}^{!}$ tilings in $S(n)$. Hence the total weight of the tilings of $S(n,k)$ will be $\frac{[F_{n}]_{q}^{!}}{[F_{k}]_{q}^{!}[F_{n-k}]_{q}^{!}}=\begin{bmatrix}
n\\
k
\end{bmatrix}_{\mathcal{F}}$.

\end{proof}

\subsection{Identities }
The above models of $q-$Fibonomial numbers can be used to generate proofs for a few identities. As we have already seen the cobinatorial arguments using the rectangular grid in section \ref{cry}, we will make use of the staircase model here.

\begin{proposition}
     For $n\geq 1$ and $0\leq k\leq n$;
     \[
\begin{bmatrix}
n\\
k
\end{bmatrix}_{\mathcal{F}}=  [F_{k+1}]_{q^{F_{n-k}}} 
 \begin{bmatrix}
n-1\\
k
\end{bmatrix}_{\mathcal{F}}+   q^{F_{n-k}F_{k+1}} 
  [F_{n-1-k}]_{q^{F_{k}}}  \begin{bmatrix}
n-1\\
k-1
\end{bmatrix}_{\mathcal{F}}
     \]
\end{proposition}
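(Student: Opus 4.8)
The plan is to read the left-hand side through the staircase model: by the preceding theorem, $\begin{bmatrix} n \\ k \end{bmatrix}_{\mathcal{F}} = \sum_{T \in S(n,k)} \omega(T)$, the total weight of all tilings of the size-$n$ staircase taken over lattice paths that start at $(k,0)$. I would then split $S(n,k)$ according to the very first step of the lattice path, that is, according to the type of barrier occupying the bottom row (which has length $n-1$ and barrier point $k$). This is exactly the dichotomy already treated in Proposition \ref{ip1}, with the one difference that in the staircase only the tiled compartment of the bottom row contributes a weight; the complementary compartment is left untiled and contributes a factor $1$.

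In the case where the first step is an $I$ step, only the left compartment of the bottom row is tiled. Its barrier has height $1+(n-1-k)=n-k$, so by the rules of the barrier tiling method (Section \ref{bari}) its dominos carry the parameter $q^{F_{n-k}}$ and the $k$ cells of the compartment contribute precisely $[F_{k+1}]_{q^{F_{n-k}}}$. Deleting this bottom row leaves the rows $2,\dots,n-1$, which form a size-$(n-1)$ staircase whose admissible paths all start at $(k,0)$, i.e.\ $S(n-1,k)$; its total weight is $\begin{bmatrix} n-1\\k \end{bmatrix}_{\mathcal{F}}$. This produces the first summand $[F_{k+1}]_{q^{F_{n-k}}}\begin{bmatrix} n-1\\k \end{bmatrix}_{\mathcal{F}}$.

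In the case where the first step is an $L$ step, only the right compartment is tiled, beginning with the special domino. That compartment has $n-k$ cells, so the special domino has floor $n-k$, and since the barrier has height $k$ its weight is $q^{F_{n-k}F_{k+1}}$; the remaining $n-k-2$ cells, weighted with parameter $q^{F_{k}}$, contribute $[F_{n-1-k}]_{q^{F_{k}}}$, while the untiled left compartment contributes nothing. Removing the bottom row now leaves a staircase whose paths start at $(k-1,0)$, namely $S(n-1,k-1)$, of total weight $\begin{bmatrix} n-1\\k-1 \end{bmatrix}_{\mathcal{F}}$. Summing the two mutually exclusive cases reproduces the right-hand side exactly.

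The step I expect to be the main obstacle is justifying that deleting the bottom row genuinely returns the smaller staircase \emph{with the same weights}. Concretely, I must verify that the floor and height of every domino in rows $2,\dots,n-1$ of the size-$n$ staircase coincide with the values they receive in the size-$(n-1)$ staircase once the path is shifted down by one unit. Because these quantities depend only on the within-row compartment sizes, and row $j$ of the larger staircase and row $j-1$ of the smaller one both have length $n-j$ and meet the same (shifted) lattice path, the weights should agree term by term; but it is this index bookkeeping, rather than any genuinely hard idea, that requires care to state cleanly.
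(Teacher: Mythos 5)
Your proposal is correct and follows essentially the same route as the paper's proof: both split $S(n,k)$ according to whether the first step of the lattice path on the bottom row is an $I$ or an $L$ barrier, compute the bottom-row contribution ($[F_{k+1}]_{q^{F_{n-k}}}$, respectively $q^{F_{n-k}F_{k+1}}[F_{n-1-k}]_{q^{F_{k}}}$), and identify the remaining rows with a size-$(n-1)$ staircase with starting point $k$ or $k-1$. Your closing remark about verifying that floors and heights are preserved under deletion of the bottom row is a point the paper passes over silently, so making it explicit only strengthens the argument.
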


\begin{proof}
    The left hand side is the total number of ways to partition tile a staircase structure of dimensions $n$ with respect to the starting point $k$. The first step of the path can be either $I$ or $L$. When the first step is $I$, the barrier point of the $2^{nd}$ bottom strip is $k$. We have to tile the left compartment of this $I$ barrier and partition tile the $n-1$ staircase structure (obtained excluding the first row) with starting point $k$. As the height of the barrier is $n-k$, there are $ [F_{k+1}]_{q^{F_{n-k}}} $ ways to tile the left compartment of the first row. There are $\begin{bmatrix}
n-1\\
k
\end{bmatrix}_{\mathcal{F}}$ ways to tile the $n-1$ staircase grid formed by the remaining strips. As a result there are $[F_{k+1}]_{q^{F_{n-k}}} 
 \begin{bmatrix}
n-1\\
k
\end{bmatrix}_{\mathcal{F}}$ tilings with the first step of the corresponding lattice path is an $I$ step.

Suppose the first step be an $L$ step. The barrier point is $k$ for the first row. As the height of the barrier is $k$ and the strip is of length $n-1$, the special domino weights $ q^{F_{n-k}F_{k+1}}$ and there are $ [F_{n-1-k}]_{q^{F_{k}}} $ ways to tile the remaining part of the right compartment. Since the first step is $L$, the starting point get shifted by 1. An $n-1$ staircase with $k$ as starting point can be tiled in $\begin{bmatrix}
n-1\\
k-1
\end{bmatrix}_{\mathcal{F}}$ ways. Hence the total number of tiles with first step $L$ is $ q^{F_{n-k}F_{k+1}} 
  [F_{n-1-k}]_{q^{F_{k}}}  \begin{bmatrix}
n-1\\
k-1
\end{bmatrix}_{\mathcal{F}}$. This completes the proof.
\end{proof}

\begin{proposition}
     For $n\geq 1$ and $0\leq k\leq n-1$;
     \[
\begin{bmatrix}
n\\
k
\end{bmatrix}_{\mathcal{F}}= \mathlarger{\mathlarger{\sum}}_{i=0}^{k} \left \{ q^{F_{n-k}\sum_{j=1}^{i}F_{k-j+2}}
\left ( \prod_{j=1}^{i} [F_{n-k-1}]_{q^{F_{k-j+1}}}  \right )
  [F_{k-i+1}]_{q^{F_{n-k}}}
 \begin{bmatrix}
n-i-1\\
k-i
\end{bmatrix}_{\mathcal{F}}\right \}
     \]
\end{proposition}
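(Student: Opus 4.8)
The plan is to read the right-hand side off the staircase model directly, classifying the tilings in $S(n,k)$ according to the length of the initial run of $L$ steps in the associated lattice path. Since any path from $(k,0)$ to $(0,n)$ uses exactly $k$ steps of type $L$ and $n-k$ of type $I$, the number $i$ of consecutive $L$ steps at the very bottom is well defined and ranges over $0\le i\le k$; when $i<k$ the $(i+1)$-th step is the first $I$ step, and when $i=k$ all $L$ steps are exhausted so every subsequent step is forced to be $I$. This partitions $S(n,k)$ into $k+1$ classes, and by the staircase theorem the total weight of each class should reproduce the corresponding summand.

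Fixing $i$, I would track the barrier point row by row. After $r$ initial $L$ steps the path sits in column $k-r$, so for $0\le r\le i-1$ row $r$ (of length $n-1-r$) carries an $L$ barrier at point $k-r$, while row $i$ carries an $I$ barrier at point $k-i$. A short computation shows every one of these $L$ barriers has height $k-r$ and its special domino has floor $(n-1-r)-(k-r)+1=n-k$, independent of $r$; hence the special dominos contribute $\prod_{r=0}^{i-1}q^{F_{n-k}F_{k-r+1}}=q^{F_{n-k}\sum_{j=1}^{i}F_{k-j+2}}$. The remaining cells of each such right compartment form a $q^{F_{k-r}}$-weighted strip of length $n-k-2$, giving $[F_{n-k-1}]_{q^{F_{k-r}}}$ by Lemma \ref{qlemma}, so the product over $r$ is $\prod_{j=1}^{i}[F_{n-k-1}]_{q^{F_{k-j+1}}}$. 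The $I$ barrier at row $i$ has height $n-k$ and a left compartment of $k-i$ cells, which as a $q^{F_{n-k}}$-weighted strip contributes $[F_{k-i+1}]_{q^{F_{n-k}}}$.

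It remains to identify the contribution of everything above row $i$, and this is the step I expect to need the most care. The claim is that rows $i+1,\dots,n-1$ together with the continuation of the path from $(k-i,i+1)$ form, after relabelling, a staircase of size $n-i-1$ with starting point $k-i$: the cell counts match (the $s$-th surviving row has $n-i-2-s$ cells, exactly as in a size-$(n-i-1)$ staircase), and, crucially, the floor and height of every domino above row $i$ are unchanged by the relabelling because both depend only on the unchanged row lengths and the unchanged column of the path. Granting this, the staircase theorem assigns weight $\begin{bmatrix} n-i-1 \\ k-i \end{bmatrix}_{\mathcal{F}}$ to the top part, so the class indexed by $i$ contributes the product of the four factors above. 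Summing over $0\le i\le k$—and noting the degenerate endpoint $i=k$, where the $I$ barrier sits in column $0$ so that $[F_{1}]_{q^{F_{n-k}}}=1$ and $\begin{bmatrix} n-k-1 \\ 0 \end{bmatrix}_{\mathcal{F}}=1$—yields exactly the claimed identity. An equivalent route is to iterate the preceding proposition, peeling off one $L$ step at a time; the same bookkeeping of heights and floors is what makes the two presentations agree.
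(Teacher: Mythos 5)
Your proof is correct and takes essentially the same approach as the paper: both classify the tilings of $S(n,k)$ by the number $i$ of initial $L$ steps (equivalently, the row containing the first $I$ step), compute the same height/floor weights $q^{F_{n-k}\sum_{j=1}^{i}F_{k-j+2}}$, $\prod_{j=1}^{i}[F_{n-k-1}]_{q^{F_{k-j+1}}}$ and $[F_{k-i+1}]_{q^{F_{n-k}}}$, and identify everything above row $i$ as an $(n-i-1)$-staircase with starting point $k-i$. Your explicit check that floors and heights are preserved under the relabelling of the upper staircase is a detail the paper leaves implicit, but it is the same argument.
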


\begin{proof}
  One can partition tile an $n$ staircase grid with the starting point $k$ in $\begin{bmatrix}
n\\
k
\end{bmatrix}_{\mathcal{F}}$ ways. Its easy to see that any lattice path of staircase grid have their first $I$ step in between the first row and the $(k+1)^{th}$ row. Consider the lattice paths which has its first $I$ step in the $(i+1)^{th}$ row. The first $i$ rows we have $L$ barrier with $j^{th}$ row having barrier height $k-j+1$ for all $1\leq j \leq i$. Hence the special domino and the remaining part of the right compartment of $j^{th}$ row can be tiled in respectively $q^{F_{n-k}F_{k-j+2}}$ and $[F_{n-k-1}]_{q^{F_{k-j+1}}}$ number of ways. The total weight of all special dominos in the first $i$ rows will be $q^{F_{n-k}\sum_{j=1}^{i}F_{k-j+2}}$. Similarly the number of ways to tile all the right compartments excluding the special dominos is $\prod_{j=1}^{i} [F_{n-k-1}]_{q^{F_{k-j+1}}}$. As the path has moved $i$ steps to the left, the left compartment of the first $I$ barrier will be of the length $k-i$. Note that the $(i+1)^{th}$ row is of length $n-i-1$. From this, we can conclude the left compartment of $(i+1)^{th}$ row can be tiled in $[F_{k-i+1}]_{q^{F_{n-k}}}$ ways. Now the remaining portion to tile is an $n-i-1$ staircase structure. As the lattice path has took $k-i$ steps, the lattice paths of the $n-i-1$ staircase has the starting point $k-i$. There are $ \begin{bmatrix}
n-i-1\\
k-i
\end{bmatrix}_{\mathcal{F}}$ ways to tile this. Product of all these weights gives the number of ways to tile the grid when the first $I$ step is on $(i+1)^{th}$ row. As the maximum $L$ steps possible before the first $I$ is $k$, we vary $i$ from 0 to $k$. The total sum of all those weights will give the expression in the right hand side. This proves the proposition.

\end{proof}

\begin{proposition}
     For $n\geq 1$ and $0\leq k\leq n$:
    \begin{multline*}
        \begin{bmatrix}
n\\
k
\end{bmatrix}_{\mathcal{F}}= \mathlarger{\mathlarger{\sum}}_{i=0}^{n-k-2} \left \{ q^{F_{k+1}F_{n-k-i}}  [F_{n-k-i-1}]_{q^{F_{k}}}
\left ( \prod_{j=1}^{i} [F_{k+1}]_{q^{F_{n-k-j+1}}}  \right )
 \begin{bmatrix}
n-i-1\\
k-1
\end{bmatrix}_{\mathcal{F}}\right \}
\\
+\prod_{j=1}^{n-k}[F_{k+1}]_{q^{F_{n-k-j+1}}}
\end{multline*}

\end{proposition}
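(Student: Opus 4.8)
The plan is to mirror the strategy of the two preceding staircase propositions, now conditioning on the position of the \emph{first} $L$ step of the lattice path, equivalently on the number $i$ of consecutive $I$ steps at the bottom. Since $\begin{bmatrix} n \\ k \end{bmatrix}_{\mathcal{F}}$ is the total weight of all tilings in $S(n,k)$, I would partition $S(n,k)$ according to this statistic and show that each block contributes exactly one term of the right-hand side.

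First I would handle the generic case $0\le i\le n-k-2$, where the path takes $I$ steps in rows $1,\dots,i$ and its first $L$ step in row $i+1$. Because an $I$ step never shifts the abscissa, all of these initial barriers sit at the starting point $k$; the $I$ barrier in row $j$ (of length $n-j$) then has height $n-k-j+1$, so its tiled left compartment contributes $[F_{k+1}]_{q^{F_{n-k-j+1}}}$, and the product over $j=1,\dots,i$ yields the factor $\prod_{j=1}^{i}[F_{k+1}]_{q^{F_{n-k-j+1}}}$. In row $i+1$ (length $n-i-1$, barrier point $k$) the $L$ step forces a special domino whose floor is $n-k-i$ and whose barrier has height $k$; by the special-domino weighting this gives $q^{F_{k+1}F_{n-k-i}}$, while the remaining $n-k-i-2$ cells of the right compartment are tiled with parameter $q^{F_{k}}$, contributing $[F_{n-k-i-1}]_{q^{F_{k}}}$. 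Peeling off these $i+1$ rows leaves a staircase of dimension $n-i-1$ whose path now starts at $k-1$, which by the staircase theorem is weighted by $\begin{bmatrix} n-i-1 \\ k-1 \end{bmatrix}_{\mathcal{F}}$. Multiplying these pieces reproduces the $i$-th summand.

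The delicate point, and what I expect to be the main obstacle, is pinning down which values of $i$ survive and isolating the standalone final product. For $i=n-k-1$ the first $L$ step would land in row $n-k$, whose right compartment is a single cell, so no special domino fits and such paths admit no valid tiling; this is exactly why the sum truncates at $n-k-2$. The only remaining tiling is the unique path $I^{n-k}L^{k}$, in which the first $n-k$ rows are tiled by $I$ barriers at abscissa $k$ (the last running along the right edge, with empty right compartment and height $1$, contributing $[F_{k+1}]_{q^{F_{1}}}$) and the final $k$ rows are $L$ steps with empty right compartments that carry weight $1$. This path contributes precisely $\prod_{j=1}^{n-k}[F_{k+1}]_{q^{F_{n-k-j+1}}}$, the isolated final term. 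Adding the generic sum to this terminal contribution exhausts $S(n,k)$, and since both sides enumerate the same weighted set the identity follows; the care required lies entirely in the boundary bookkeeping of floors, heights, and the degenerate edge compartments.
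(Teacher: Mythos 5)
Your proposal is correct and follows essentially the same route as the paper's proof: condition on the row containing the first $L$ step, multiply the weights of the $I$-barrier left compartments $[F_{k+1}]_{q^{F_{n-k-j+1}}}$, the special domino $q^{F_{k+1}F_{n-k-i}}$ with the rest of its right compartment $[F_{n-k-i-1}]_{q^{F_{k}}}$, and the residual $(n-i-1)$-staircase with starting point $k-1$, then dispose of the degenerate case $i=n-k-1$ (one-cell right compartment, no special domino) and the forced terminal path $I^{n-k}L^{k}$ giving the standalone product. If anything, your bookkeeping is slightly cleaner than the paper's, which misstates the length of the $(i+1)^{\text{th}}$ row as $n-i$ (it is $n-i-1$, as you have it) while computing the weights consistently with the correct length.
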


\begin{proof}
The proof uses a similar approach to the previous one. Here we will enumerate the number of tilings possible of an $n$ staircase grid with the starting point $k$ by counting the tilings with lattice path having its first $L$ on the $(i+1)^{th}$ row. The first $i$ rows have an $I$ barrier and for any $1\leq j\leq i$ the height of the $j^{th}$ barrier is $n-j-k+1$. As the left compartment of each of these rows has length $k$, All these left compartments can be tiled together in $\prod_{j=1}^{i} [F_{k+1}]_{q^{F_{n-k-j+1}}}$ ways. Notice that the $(i+1)^{th}$ row has length $n-i$ and the height of the corresponding $L$ barrier is $k$. From this one can see, there are $q^{F_{k+1}F_{n-k-i}}  [F_{n-k-i-1}]_{q^{F_{k}}}$ ways to tile the right compartment of that barrier. To tile the remaining $n-i-1$ staircase with the starting point $k-1$, there are $\begin{bmatrix}
n-i-1\\
k-1
\end{bmatrix}_{\mathcal{F}}$ possibilities. The product of all these weights gives the total weight of all tilings with its first $L$ step at $(i+1)^{th}$ row. Note that we can vary $i$ from 0 to $n-k$. For $i>n-k$ the path will go outside the grid. But there are no tilings with their first $L$ step at $(n-k)^{th}$ row as it will leads to just 1 cell in the right compartment of that barrier which is not possible due to the domino restriction. This happens when $i=n-k-1$. We have to sum up the weights corresponding to all $0\leq i\leq n-k-2$. This supply the first summation in the right hand side. When $i=n-k$ we have to tile the left compartment of $I$ barrier in the first $n-k$ rows and after that lattice path is forced to move such that all the remaining steps are $L$ with empty right compartment. This give rise to the second expression in the right hand side. This completes the proof as the right hand side expression calculate the partition tilings of $n$ staircase grid with lattice starting point $k$.

\end{proof}

\section{Acknowledgments}

I would like to thank my adviser Anurag Singh for guiding me through the right track.

\bibliographystyle{abbrv}
\bibliography{references}

\end{document}